\tikzset{cross/.style={cross out, draw, 
         minimum size=4*(#1-\pgflinewidth), 
         inner sep=0pt, outer sep=0pt}}
\tikzset{circ/.style={shape=circle, inner sep=1pt, draw, node contents=}}
\newcommand{\RR}{\mathbb R}
\newcommand{\CC}{\mathbb C}
\newcommand{\GL}{\operatorname{GL}}
\newcommand{\defn}[1]{\emph{#1}}
\newcommand{\itres}{\operatorname{tres}}
\newcommand{\res}{\operatorname{res}}
\newtheorem{theorem}{Theorem}
\newtheorem*{theorem*}{Theorem}
\newtheorem{lemma}{Lemma}
\newtheorem{proposition}{Proposition}
\newtheorem*{proposition*}{Proposition}
\newtheorem{corollary}{Corollary}
\newtheorem*{prop*}{Proposition}
\theoremstyle{definition} 
\newtheorem{definition}{Definition}
\newtheorem*{example*}{Example}
\newtheorem{example}{Example}
\newtheorem*{warning*}{Warning}
\theoremstyle{remark} 
\newtheorem{remark}{Remark}
\tikzset{double line with arrow/.style args={#1,#2}{decorate,decoration={markings,%
mark=at position 0 with {\coordinate (ta-base-1) at (0,1pt);
\coordinate (ta-base-2) at (0,-1pt);},
mark=at position 1 with {\draw[#1] (ta-base-1) -- (0,1pt);
\draw[#2] (ta-base-2) -- (0,-1pt);
}}}}
\newcommand{\papertitle}{\Large{A residue formula for integrals\\[0.4em]with hyperplane singularities}}
\title[Integrals with hyperplane singularities]{\papertitle}
\author{Andrew O'Desky}
\address{
\parbox{0.7\linewidth}{
    Department of Mathematics\\
    Princeton University\\
}}
\email{andy.odesky@gmail.com}
\date{\today. Research supported by NSF grant DMS-2103361.}
\begin{document}



\maketitle


This article is concerned with evaluating integrals of the form 
\begin{equation}\label{eqn:presentationOfOmega} 
    \int_{\RR^r} \frac{h(z)\,dz}{g_1(z)\cdots g_R(z)}
\end{equation} 
where each $g_k$ is a {complex affine function} 
that does not vanish on $\RR^r$. 
When the dimension $r$ is one 
    the set $Z$ of poles of the meromorphic form 
    $\omega = h(z)(g_1(z)\cdots g_R(z))^{-1}\,dz$ 
    has a distinguished subset, 
    namely those poles in the {closed upper half-plane} 
    $Z_{\overline{\mathbb H}} =
    \{z \in Z : \mathrm{Im}\,z \geq 0\}$. 
Under suitable convergence conditions the integral is given by 
\begin{equation} 
    \int_\RR \frac{h(z)\,dz}{g_1(z)\cdots g_R(z)}
    =
    2\pi i
    \sum_{z\in Z_{\overline{\mathbb H}}}
    \mathrm{res}[\omega,z].
\end{equation} 
When the dimension $r$ is greater than one, 
    the set $Z$ of poles of $\omega$ 
    is indexed by \emph{affine flags} 
\begin{equation} 
\gamma \,\,\,: \,\,\,
H_{1} \supset 
H_{1} \cap H_{2} \supset \cdots \supset
    H_{1} \cap \cdots \cap H_{r} = \{z_\gamma\}
\end{equation} 
cut out by $r$ of the hyperplanes 
where $\omega$ is singular. 
Our residue formula takes the form 
\begin{equation} 
    \int_{\RR^r} \frac{h(z)\,dz}{g_1(z)\cdots g_R(z)}
    = 
    (2\pi i)^r
    \sum_{\gamma \in Z_\ast}
        \res[\omega,\gamma] 
\end{equation}   
where $Z_\ast \subset Z$ 
is a distinguished subset of flags 
    and $\res[\omega,\gamma]$ 
    is the iterated residue of $\omega$ along $\gamma$. 


The primary goal of this article 
    is to determine 
    a minimal subset $Z_\ast$ of flags 
    using sign conditions on the {minors} 
    of their defining matrices. 
We introduce two such conditions 
    for real matrices, which we call 
    \emph{stability} and \emph{compatability} 
    (see \S\ref{sec:Minors}). 
To apply them in the present context, 
    we use the elementary but crucial observation 
    that any affine hyperplane $H_k \subset \CC^r$ 
    disjoint from $\RR^r$ may be expressed as 
\begin{equation} 
    H_k = 
    \{v \in \CC^r : f_k(v) - is_k = 0\}
\end{equation} 
    for some real linear form $f_k$ 
    and constant $s_k$ with positive real part. 
For a collection of hyperplanes $H_1,\ldots,H_r$ so expressed 
    let $J_H$ be the Jacobian of 
    $(f_1,\ldots,f_r) \colon \RR^r \to \RR^r$. 

\begin{theorem*}
    Assume the iterated residue expansion of 
    $\int_{\RR^r}\omega$ along $\overline{\mathbb H}^r$ 
    converges. 
If every polar flag of $\omega$ is compatible 
    with $\overline{\mathbb H}^r$, then 
\begin{equation} 
    \frac{1}{(2\pi i)^{r}}
    \int_{\RR^r} \frac{h(z)\,dz}{g_1(z)\cdots g_R(z)}
    = 
    \sum_{\gamma \in Z_{\overline{\mathbb H}^r}}
        \res[\omega,\gamma] 
\end{equation}   
    where 
    $Z_{\overline{\mathbb H}^r} \subset Z$ 
    is the subset of flags cut out by 
    a collection $H$ of polar hyperplanes whose 
    Jacobian $J_H$ is stable. 
\end{theorem*}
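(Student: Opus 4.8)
The plan is to reduce the $r$-dimensional integral to an iteration of the one-dimensional residue theorem, integrating the variables $z_r, z_{r-1}, \ldots, z_1$ one at a time and closing each contour in $\overline{\mathbb H}$, and then to track which polar flags this procedure produces and with which sign, matching the resulting combinatorial conditions against the definition of stability from \S\ref{sec:Minors}. Writing each polar hyperplane as $H_k = \{f_k(v) = i s_k\}$ with $f_k$ real linear and $\mathrm{Re}\, s_k > 0$ as above, I would first fix real values of $z_1, \ldots, z_{r-1}$: then $\omega$ restricts to a rational $1$-form in $z_r$ with no poles on $\RR$ (no $g_k$ vanishes on $\RR^r$), so the convergence hypothesis licenses the one-dimensional residue formula. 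A factor $g_k$ produces a pole in $z_r$ exactly when $\partial f_k/\partial z_r \neq 0$, at $z_r = (i s_k - f_k(z',0))/(\partial f_k/\partial z_r)$ with $z' = (z_1,\ldots,z_{r-1})$; since $\mathrm{Re}\, s_k > 0$ this pole lies in $\overline{\mathbb H}$ precisely when the $1\times1$ minor $\partial f_k/\partial z_r$ is positive, and then it lies in the open upper half plane, never on $\RR$.

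Next I would take the residue at such a pole and substitute back into the remaining factors. The restriction of $\omega$ to $H_k$ again has the shape $h'(z')\,dz'/\prod_{j\ne k} g_j'(z')$ with $g_j'(z') = \tilde f_j(z') - i\tilde s_j$, where $(\tilde f_j, \tilde s_j)$ is obtained from $(f_j, s_j)$ by eliminating $z_r$ against $(f_k, s_k)$. In particular $\mathrm{Re}\,\tilde s_j$ and each subsequent partial $\partial \tilde f_j/\partial z_m$ equal, up to multiplication by the positive number $\partial f_k/\partial z_r$, a $2\times2$ minor of the matrix whose rows are $(\partial_1 f_j, \ldots, \partial_r f_j, \mathrm{Re}\, s_j)$ and $(\partial_1 f_k, \ldots, \partial_r f_k, \mathrm{Re}\, s_k)$. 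Iterating the elimination through $z_{r-1}, \ldots, z_1$, a flag $\gamma$ cut out by polar hyperplanes is produced by the expansion if and only if a nested family of sign conditions holds, the condition at the $m$-th stage being the positivity of an $m\times m$ minor of the matrix assembled from the (possibly re-normalized) data $(f_{k_i}, s_{k_i})$ — the re-normalization accounting for the step of re-expressing a hyperplane with $\mathrm{Re}\,\tilde s_j < 0$ so that $\mathrm{Re}(-\tilde s_j) > 0$, which flips the sign of every subsequent test. The hypothesis that every polar flag is compatible with $\overline{\mathbb H}^r$ ensures that none of these minors vanishes, so no pole lands on a real subspace mid-iteration and the sign bookkeeping is unambiguous; it also lets me identify the number produced by the iteration with the intrinsic iterated residue $\res[\omega,\gamma]$ along $\gamma$, for the ordering of $\gamma$ matching the order of elimination.

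It then remains to invoke a purely linear-algebraic lemma from \S\ref{sec:Minors} identifying the conjunction of these nested sign conditions with stability of the Jacobian $J_H$ of $H = \{H_{k_1},\ldots,H_{k_r}\}$, after which summing over all flags gives the stated identity with the sum over $Z_{\overline{\mathbb H}^r}$. I expect the main obstacle to be precisely this sign bookkeeping: one must verify that flipping the normalization of a hyperplane at an intermediate stage changes every subsequent minor test in exactly the way that preserves the minor-sign pattern defining stability, and one must rule out — using compatibility — that a flag is produced with conflicting signs by different orders of elimination. A secondary technical point is confirming that the coordinate-wise iterated residue agrees, orientation and flag-ordering included, with the $\res[\omega,\gamma]$ and the $J_H$ in the convention of \S\ref{sec:Minors}; this should follow from the transversality supplied by compatibility together with the standard permutation and invariance properties of iterated Leray residues.

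A homological reformulation would give the same result and might streamline the orientation issue: $\RR^r$ is a cycle in $\CC^r \setminus \bigcup_k H_k$ which, under the convergence hypothesis, can be deformed into a sum of small tori about the points $z_\gamma$, with integer coefficient $n_\gamma$, and $\int_{T_\gamma}\omega = (2\pi i)^r \res[\omega,\gamma]$; the content becomes the computation $n_\gamma = 1$ when $J_{H(\gamma)}$ is stable and $n_\gamma = 0$ otherwise, which one again pins down by the coordinate iteration. I would nonetheless carry out the argument iteratively, since that is what directly exposes the minor conditions and hence the link to stability.
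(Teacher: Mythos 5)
There is a genuine gap, and it is precisely at the step the paper calls essential. Your argument assumes that the iterated one-dimensional contour-closing picks up a flag $\gamma$ if and only if the nested minor sign conditions (i.e.\ stability of $J_H$) hold. That equivalence is false for a fixed integrand. From the explicit formula for the pole locations,
\begin{equation}
    z_k^\ast= p_k^{-1}\bigl(i(s_kp_{k-1}-s_{k-1}r_{k-1,k}+\cdots+
    (-1)^{k-1}s_1r_{1k})
    -x_{k+1}q_{k,k+1}-\cdots-x_rq_{kr}\bigr),
\end{equation}
the condition $\mathrm{Im}\,z_k^\ast>0$ depends on the constants $s_1,\ldots,s_R$ as well as on the minors of the Jacobian. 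Stability (namely $p_k>0$ and $(-1)^{k-j}r_{jk}\geq 0$) is exactly the condition that $z_k^\ast\in\mathbb H$ for \emph{every} admissible choice of $s$ with positive real parts; for the particular $s$ at hand, an unstable flag can perfectly well satisfy $z_k^\ast\in\mathbb H$ for all $k$ and hence be picked up by the expansion, with an individually nonzero residue. The paper's own example makes this concrete: for $H_1=\{x=is_1\}$, $H_2=\{x+y=is_2\}$ the flag $H_1\supset H_1\cap H_2$ is unstable ($r_{12}=1>0$) yet contributes whenever $\mathrm{Re}\,s_2\geq\mathrm{Re}\,s_1$. So the "purely linear-algebraic lemma" you plan to invoke — identifying the flags produced by the iteration with the stable ones — does not exist, and your proof cannot terminate with the sum over $Z_{\overline{\mathbb H}^r}$.

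What is missing is an argument that the \emph{sum} of residues over the unstable flags that do arise vanishes identically. The paper gets this by treating $s\in\CC_+^R$ as a parameter: the integral and the stable residues are meromorphic in $s$, whereas the unstable contribution is a sum of residues weighted by characteristic functions of open cones $S_{\gamma}\subsetneq\CC_+^R$; since such a piecewise function would be discontinuous across the real-codimension-one boundaries $\partial S_\gamma$ if nonzero, it must vanish. Nothing in your proposal plays this role, and no purely local sign bookkeeping can, because the individual unstable residues genuinely do not vanish. (A smaller point: you also assign compatibility the job of "ensuring no minor vanishes," but its actual role is the sign condition $q_{j\ell}\leq 0$, which keeps the earlier pole locations $z_j^\ast$ in $\overline{\mathbb H}$ as the later coordinates are deformed across $\overline{\mathbb H}$ — this is what rules out the failure mode of Figure~\ref{fig:ExampleLeavingPolyhedron}, and it is needed even to justify the first part of your iteration.)
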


For a typical integral, 
    there are $r!\binom{R}{r}$ flags in $Z$ 
    and at most $\binom{R}{r}$ of these will be stable. 
The essential step in our proof 
    is to show that the sum over 
    residues of the far more numerous 
    unstable flags vanishes identically, 
    even though individual residues do not vanish. 

The hypothesis of compatibility cannot be dropped. 
In fact, if the formula holds 
    for sufficiently many forms $\omega$ with 
    poles along a fixed set of hyperplanes then 
    the hyperplanes are compatible 
    (Proposition~\ref{prop:converse}). 
For a simple example which illustrates 
    the use of compatibility and stability, 
    we encourage the reader to look at Example~\ref{sec:Example}.

The only other residue formula applying in the present context 
    to our knowledge is \cite[Theorem~2]{zbMATH00739368}. 
The hypotheses of our residue formula are also 
    more practical to verify. 
Applying \cite[Theorem~2]{zbMATH00739368} requires 
    a grouping of the polar hyperplanes 
    $H_1,\ldots,H_R$ 
    into $r$ (generally reducible) divisors 
    $D_1,\ldots,D_r$ 
    satisfying a certain compatiblity condition 
    \cite[Definition~1]{zbMATH00739368}. 
When $R \gg r$ finding such a grouping 
    directly appears to be a difficult problem. 
This question is addressed 
    in \S\ref{sec:ClassicalResidue} 
    where we show that 
    stability leads to 
    a canonical compatible grouping 
    of the hyperplanes in the sense of 
    \cite[Definition~1]{zbMATH00739368}. 

Beyond their intrinsic interest, 
    integrals of the form \eqref{eqn:presentationOfOmega} 
    arise naturally in 
    the harmonic analysis of toric varieties 
    and are important for understanding 
    the distribution of rational points of bounded height. 
The approach in \cite{zbMATH01353487} 
    was to approximate these integrals. 
This suffices for determining 
    the main term in the asymptotic number 
    of rational points of bounded height, 
    but to understand the finer aspects of 
    this distribution it is important 
    to have an exact formula. 
For instance, in \cite{cubic} 
    an exact formula 
    for the height zeta function 
    of a particular toric surface 
    was used to compute the secondary term 
    in the number of monic abelian cubic trace-one polynomials 
    of bounded height. 
The residue formula developed here serves as a basis 
    for extending these methods 
    to general toric varieties. 


\section{Truncated iterated residues} 

An essential feature of our approach is our use of 
    a variant of the iterated residue. 
Let $V$ be an oriented real vector space of dimension $r$. 
First let us recall the classical residue. 

\subsection{The classical residue} 

Let $\omega$ be a meromorphic $r$-form on 
$V_\CC = V \otimes \CC$ 
and let $D$ be an irreducible divisor. 
The form $\omega$ has a simple singularity along $D$ if 
    in some neighborhood $U \subset V_\CC$ 
    of every point on $D$ the form $\omega$ 
    can be expressed as 
\begin{equation} 
    \omega = \frac{dg}{g} \wedge \omega' +\eta
\end{equation} 
where $g$ is 
an irreducible holomorphic function on $U$ 
such that $D \cap U = \{g=0\}$, 
$\eta$ is holomorphic on $U$, 
$\omega'$ is meromorphic on $U$, 
and $\omega'|_{D \cap U}$ is meromorphic on $D \cap U$. 
If $\omega$ has a simple singularity along $D$ 
    then the \defn{residue of $\omega$ along $D$} 
    is the meromorphic $(r-1)$-form on $D$ 
    locally given by $\omega'|_{D \cap U}$, 
    and is denoted by $$\res[\omega,D];$$ 
if $\omega$ does not have a simple singularity along $D$, 
then we set $\res[\omega,D]=0$. 
The residue only depends on $\omega$ and $D$, 
and not on the choices of local defining equations for $D$. 

\subsection{Flags} 

Next we take iterated residues along flags. 
Consider an affine flag in $V_\CC$ of the form 
\begin{equation} 
\gamma \,\,\,: \,\,\,
H_{1} \supset 
H_{1} \cap H_{2} \supset \cdots \supset
H_{1} \cap \cdots \cap H_{k}
\end{equation} 
for affine hyperplanes $H_1,\ldots,H_k \subset V_\CC$. 
It will always be assumed that 
    the intersection is transverse 
    so each affine subspace 
    $\gamma(j) \coloneqq H_{1} \cap \cdots \cap H_{j}$ 
    has dimension $r-j$. 
All flags in this paper will be descending, 
    which in terms of the associated flag variety 
    amounts to a transpose 
    of the usual convention. 
We recall the description. 
Let 
$$\gamma_0 \,: \,
L_{1} \supset 
L_{1} \cap L_{2} \supset \cdots \supset
L_{1} \cap \cdots \cap L_{k}$$ 
    be the linear flag in $V_\CC$ 
    cut out by the linear hyperplanes 
    $L_1,\ldots,L_k$ 
    obtained by translating $H_1,\ldots,H_k$. 
If $V_\CC = \CC^r$ 
then $\gamma_0$ may be represented 
    by a $k \times r$ matrix $J$ 
    with rows $f_1,\ldots,f_k$ where 
    $L_j = \ker f_j$ when $f_j$ is regarded as a linear form 
    $f_j \colon \CC^r \to \CC$. 
Two matrices $J,J'$ represent 
    the same flag $\gamma_0$ if and only if 
    $J' = \ell J$ for some lower-triangular matrix 
    $\ell \in \GL_k(\CC)$. 
The $r$-step (descending) 
linear flags $\gamma_0 \,: \,
L_{1} \supset 
L_{1} \cap L_{2} \supset \cdots \supset
L_{1} \cap \cdots \cap L_{r}$ in $\CC^r$ 
are canonically identified with points of 
the flag variety 
$$
\mathscr F
= B^T \,\backslash \GL_r(\CC),
$$
\begin{align} 
\gamma_0
    \leftrightarrow
B^T
\begin{bmatrix}
\,\,\rule[.5ex]{1.75em}{0.4pt}\,\, f_1 \,\,\rule[.5ex]{1.75em}{0.4pt}\,\,\\
\,\,\rule[.5ex]{1.75em}{0.4pt}\,\, f_2 \,\,\rule[.5ex]{1.75em}{0.4pt}\,\,\\
\vdots \\
\,\,\rule[.5ex]{1.75em}{0.4pt}\,\, f_r \,\,\rule[.5ex]{1.75em}{0.4pt}\,\,\\
\end{bmatrix} ,\qquad
    \gamma_0(j) = \ker f_1  \cap \cdots \cap \ker f_j
\end{align} 
where $B \subset \GL_r(\CC)$ 
    is the subgroup of upper-triangular matrices. 
For a $k\times r$ matrix $J = (a_{ij})$ 
    and $j \in \{1,\ldots,k\}$ 
    let $p_j(J)$ be the $j$th leading principal minor 
\begin{equation} 
    p_j(J)= 
   \det
    \begin{bmatrix}
        a_{11}&\cdots&a_{1j}\\ 
        \vdots& \ddots&\vdots\\
        a_{jj}&\cdots&a_{jj}\\ 
    \end{bmatrix}.
\end{equation} 
The condition $p_j(J) \neq 0$ 
    does not depend on the matrix $J$ used to represent 
    $\gamma_0$, 
    so the condition $p_j(\gamma_0) \neq 0$ makes sense 
    for any linear flag $\gamma_0 \in \mathscr F$. 

\subsection{Truncated residues} 

\begin{definition}
The \defn{(iterated) residue of $\omega$ along $\gamma$} is 
$$
\res[\omega,\gamma]\coloneqq
\res[\res[\cdots\res[\res[\omega,\gamma(1)],\gamma(2)]\cdots 
, \gamma(k-1)],\gamma(k)],$$
a meromorphic top-degree form on $\gamma(k)$. 
Let $z \colon V_\CC \xrightarrow{\sim} \CC^r$ 
    be a $\CC$-linear isomorphism. 
The \defn{truncated (iterated) residue of $\omega$ along $\gamma$ with respect to $z$} is 
\begin{equation} 
    \itres_z[\omega,\gamma]=
    \begin{cases}
    \res[\omega,\gamma]
        &\text{if $p_1(z\gamma_0),\ldots,p_k(z\gamma_0) \neq 0$},\\
        0 &\text{otherwise.}
    \end{cases}
\end{equation} 
\end{definition}

\begin{remark}
    It is well-known that $p_1(J),\ldots,p_r(J) \neq 0$ 
    if and only if $J$ is contained in 
    the open set $B^T B$ 
    of invertible matrices admitting an $LU$-decomposition. 
In the primary case of interest when $k=r$, we thus have 
\begin{equation}\label{eqn:itresFormula1} 
    \itres_z[\omega,\gamma]=
    1_{B^T B}(z\gamma_0)
\res[\res[\cdots\res[\res[\omega,\gamma(1)],\gamma(2)]\cdots 
, \gamma(r-1)],\gamma(r)]
\end{equation} 
        where $1_{B^T B}$ is the characteristic function 
        of the open Bruhat cell 
        $B^T \,\backslash \,B^T B \subset \mathscr F$. 
    \end{remark}


\subsection{Solubility} 

We give some motivation for the truncated residue. 
Let $z_1,\ldots,z_r$ be the standard coordinates on $\CC^r$. 
Let $\omega=G(z) \, dz_1 \wedge \cdots \wedge dz_r$ 
be a meromorphic top-degree form on $\CC^r$. 
Let 
    $\gamma \,: \,
    H_{1} \supset 
    H_{1} \cap H_{2} \supset \cdots \supset
    H_{1} \cap \cdots \cap H_{r}$ 
    be a flag 
    cut out by affine hyperplanes $H_1,\ldots,H_r\subset \CC^r$ 
    intersecting transversely. 
We have defined the iterated residue $\res[\omega,\gamma]$. 
Now we consider computing 
    the iterated residue of $\omega(z)$ along $\gamma$ 
    \emph{in coordinates}. 
A careful analysis of this computation 
    reveals something subtle: 
    the iterated residue in coordinates 
    does not equal $\res[\omega,\gamma]$ 
    in general 
    but rather equals the truncated residue 
    $\itres_z[\omega,\gamma]$. 
Briefly, truncation occurs when there is insolubility 
    of singularities, and insolubility occurs 
    when a leading principal minor vanishes. 

Let $(v_1,\ldots,v_r)$ be the standard basis of $\CC^r$. 
If for all $x_2\ldots,x_r \in \RR$ 
there exists $z_1^\ast \in \CC$ 
such that $$z_1^\ast v_1+x_2v_2+\cdots+x_rv_r \in H_1$$ 
then we define 
\begin{equation} 
    \res_{z}[\omega,H_1]
    \coloneqq
    \res[G(z_1,x_2,\ldots,x_r)\,dz_1,z_1=z_1^\ast]
    \,dz_2 \wedge \cdots \wedge dz_r
\end{equation} 
where $\res[G\,dz_1,z_1=z_1^\ast]$ 
denotes the usual one-variable residue of 
$G\,dz_1$ at $z_1^\ast$; 
otherwise if $z_1^\ast$ is not always soluble, 
we set $\res_{z}[\omega,H_1]$ equal to zero. 
It is easy to see that 
\begin{equation} 
    \res_z[\omega,H_1]=
    \begin{cases}
        \res[\omega,H_1]
        &\text{if $z_1^\ast$ is soluble},\\
        0 &\text{otherwise.}
    \end{cases}
\end{equation} 
Note that vanishing here in the insoluble case 
    occurs for different reasons than vanishing 
    of $\res[\omega,H_1]$ when 
    $\omega$ does not have a simple singularity along $H_1$. 

Now we iterate this definition. 
The real affine function 
    $(x_2,\ldots,x_r) \mapsto z_1^\ast(x_2,\ldots,x_r)$ 
    extends to an affine function 
    $z_1^\ast \colon \CC^{r-1} \to \CC$, 
and we make the identification 
\begin{align} 
    \CC^{r-1} &\to H_1 \\
    (x_2,\ldots,x_r) &\mapsto 
    z_1^\ast(x_2,\ldots,x_r) v_1 + x_2 v_2 + \cdots + x_r v_r.
\end{align} 
Next we replace $\omega,H_1,z$ with 
    $\res_{z}[\omega,H_1],H_1 \cap H_2, z_{(2)} = (z_2,\ldots,z_r)$. 
If we write 
    $\res_z[\omega,H_1]=G_{H_1}(z_{(2)}) \,dz_2 \wedge \cdots \wedge dz_r$ 
then we have that 
\begin{equation} 
    \res_{z_{(2)}}[
    \res_z[\omega,H_1]
    , H_1 \cap H_2]=
\res[G_{H_1}(z_2,x_3,\ldots,x_r)\,dz_2,z_2=z_2^\ast]
    \,dz_3 \wedge \cdots \wedge dz_r
\end{equation} 
if $z_2^\ast$ is soluble, and otherwise it is zero. 
The condition of solubility now means that 
for any $x_3,\ldots,x_r \in \RR$ there exists $z_2^\ast \in \CC$ 
such that 
$$z_1^\ast(z_2^\ast,x_3,\ldots,x_r) v_1+z_2^\ast v_2+\cdots+x_rv_r \in H_1 \cap H_2.$$ 
It satisfies 
\begin{equation} 
    \res_{z_{(2)}}[
    \res_z[\omega,H_1]
    , H_1 \cap H_2]=
    \begin{cases}
    \res[\omega,H_1\supset H_1 \cap H_2]
        &\text{if $z_1^\ast,z_2^\ast$ are soluble},\\
        0 &\text{otherwise.}
    \end{cases}
\end{equation} 
Iterating this process leads to another 
    way of taking the iterated residue of $\omega$ 
    along $\gamma$, and 
    with the help of the proposition 
    we will see that this 
    agrees with the truncated residue. 



\begin{proposition}\label{prop:openBruhat}
Let $\gamma$ be a complete affine flag in $V_\CC$ 
    and let $z\colon V_\CC \xrightarrow{\sim} \CC^r$ 
    be a $\CC$-linear isomorphism. 
The values $z_1^\ast,\ldots,z_r^\ast$ are soluble 
    if and only if 
    the linearized flag $z\gamma_0$ 
    is in the open Bruhat cell 
    $B^T \, \backslash \,B^T B \subset \mathscr F$. 
\end{proposition}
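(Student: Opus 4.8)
The plan is to transport everything to $\CC^r$ via the isomorphism $z$, translate the solubility of $z_1^\ast,\ldots,z_k^\ast$ into the non-vanishing of the leading principal minors $p_1(z\gamma_0),\ldots,p_k(z\gamma_0)$, and then invoke the characterization of the open Bruhat cell recalled in the Remark above.

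After applying $z$ we may assume $V_\CC = \CC^r$ and $z = \mathrm{id}$, so that $z\gamma_0 = \gamma_0$. Write $H_j = \{v \in \CC^r : f_j(v) = c_j\}$ with $f_j = \sum_{l=1}^{r} a_{jl}z_l$, and let $J = (a_{jl})$ be the matrix whose rows are the coefficient vectors of $f_1,\ldots,f_r$; then $J$ represents $\gamma_0$, and transversality of the flag forces $J \in \GL_r(\CC)$. The heart of the argument is the assertion, proved by induction on $k$, that $z_1^\ast,\ldots,z_k^\ast$ are soluble if and only if $p_1(J),\ldots,p_k(J) \neq 0$.

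For $k = 1$ the point $z_1 v_1 + x_2 v_2 + \cdots + x_r v_r$ lies on $H_1$ precisely when $a_{11}z_1 = c_1 - a_{12}x_2 - \cdots - a_{1r}x_r$, which can be solved for $z_1$ for every real tuple $(x_2,\ldots,x_r)$ if and only if $a_{11} = p_1(J) \neq 0$ (if $a_{11}=0$ the right-hand side is not identically zero, as $f_1 \neq 0$). For the inductive step, assume $z_1^\ast,\ldots,z_{k-1}^\ast$ soluble, so that $p_1(J),\ldots,p_{k-1}(J) \neq 0$ and, solving the first $k-1$ equations $f_j(\cdot) = c_j$ by Cramer's rule, $z_1^\ast,\ldots,z_{k-1}^\ast$ are the uniquely determined $\CC$-affine functions of $(z_k, x_{k+1},\ldots,x_r)$ whose substitution places the point on $\gamma(k-1)$. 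Plugging these into $f_k$, the point lies on $H_1 \cap \cdots \cap H_k$ exactly when a certain affine equation in $(z_k, x_{k+1},\ldots,x_r)$ holds, and $z_k^\ast$ is soluble if and only if the coefficient of $z_k$ in that equation is nonzero; if that coefficient vanished, solubility for all real $x_{k+1},\ldots,x_r$ would force $f_k$ to be constant on $H_1 \cap \cdots \cap H_{k-1}$, contradicting transversality. By Cramer's rule this coefficient is the Schur complement of the top-left $(k-1)\times(k-1)$ block of $J$ in its top-left $k \times k$ block, i.e.\ $p_k(J)/p_{k-1}(J)$, which is nonzero if and only if $p_k(J) \neq 0$. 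This closes the induction.

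Taking $k = r$, solubility of $z_1^\ast,\ldots,z_r^\ast$ is equivalent to $p_1(J),\ldots,p_r(J) \neq 0$, which by the Remark holds if and only if $J$ admits an $LU$-decomposition, i.e.\ if and only if $z\gamma_0 \in B^T\backslash B^T B$. The one genuine computation is the Schur-complement identity in the inductive step; this amounts to the familiar fact that the successive pivots in Gaussian elimination are ratios of consecutive leading principal minors, but because the real parameters $x_{k+1},\ldots,x_r$ are carried along throughout, some care is needed to confirm that they play no role in solubility — which, as noted, follows from transversality of the flag.
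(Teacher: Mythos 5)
Your proof is correct, and it arrives at the same intermediate characterization as the paper --- solubility of $z_1^\ast,\ldots,z_r^\ast$ is equivalent to $p_1(J),\ldots,p_r(J)\neq 0$, hence to $J\in B^TB$ --- but by a genuinely different route. You work entirely in the matrix $J=\partial f/\partial z$ itself: an induction on $k$ in which $z_1,\ldots,z_{k-1}$ are eliminated by Cramer's rule and the pivot of $z_k$ is identified as the Schur complement $p_k(J)/p_{k-1}(J)$. The paper instead characterizes solubility at step $k$ by whether the coordinate functions $z_{k+1},\ldots,z_r$ restrict to a coordinate system on $\gamma(k)_0$, which becomes invertibility of the trailing $(r-k)\times(r-k)$ principal submatrices of the \emph{inverse} matrix $[z_i(w_j)]=(\partial f/\partial z)^{-1}$ (with $(w_j)$ the dual basis to $(f_j)$), and then converts trailing minors of the inverse into leading minors of $J$ via the identity $(w_0B^TBw_0)^{-1}=B^TB$. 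Your version is more elementary --- no dual basis and no complementary-minors identity --- at the cost of carrying the induction and the parameters $x_{k+1},\ldots,x_r$ through each step; the paper's handles all $k$ at once and makes the role of the longest Weyl element visible. Two points you handle correctly but should keep explicit in a final write-up: when the pivot of $z_k$ vanishes, you must exclude both the case where the residual affine equation in $x_{k+1},\ldots,x_r$ fails for some real $x$ (no solution) and the case where it holds identically (no \emph{unique} solution, and $\gamma(k-1)\subset H_k$ contradicts transversality); and the identification of the iteratively produced $z_1^\ast,\ldots,z_{k-1}^\ast$ with the simultaneous Cramer's-rule solution rests on uniqueness of the solution of the nondegenerate triangular system, which your inductive hypothesis supplies.
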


\begin{proof} 
It suffices to consider the case $V = \RR^r$ and 
    $z$ is the identity. 
Let $\gamma :
H_{1} \supset 
H_{1} \cap H_{2} \supset \cdots \supset
H_{1} \cap \cdots \cap H_{r}$ 
where $H_k = \{g_k=0\}$ 
    and let $\partial g/\partial z$ 
    be the Jacobian matrix of 
    $g=(g_1,\ldots,g_r) \colon \CC^r \to \CC^r$. 
Note that $\partial g/\partial z$ 
    represents the flag $\gamma_0$. 
Fix $x_2,\ldots,x_r \in \RR$. 
At the $k$th step 
    of the truncated residue $\itres_z[\omega,\gamma]$, 
    the residue is taken at the unique value 
    $z_k=z_k^\ast=z_k^\ast(x_{k+1},\ldots,x_r) \in \CC$ 
    when it exists for which 
\begin{equation}\label{eqn:fk} 
    z_1^\ast v_1+\cdots+z_k^\ast v_k
    +x_{k+1}v_{k+1}+\cdots+x_rv_r \in 
    \gamma(k)=H_1 \cap \cdots \cap H_k
\end{equation} 
    where $z_1^\ast,\ldots,z_{k-1}^\ast$ 
    are determined by earlier residues. 
We claim that $z_1^\ast,\ldots,z_r^\ast$ 
    are soluble if and only if 
    $\partial g/\partial z \in B^T B$. 
Let $\gamma(k)_0$ denote the linear subspace 
    obtained from translating $\gamma(k)$. 
The value for $z_k^\ast$ exists in $\CC$ 
    for all $x_{k+1},\ldots,x_r\in \RR$ if and only if 
the restrictions of the linear forms 
    $z_{k+1},\ldots,z_{r}$ to $\gamma(k)_0$ 
are linearly independent in $\gamma(k)_0^\vee$. 
Indeed then 
$(z_{k+1},\ldots,z_r) \colon \gamma(k) \to \CC^{r-k}$ 
is an isomorphism of affine spaces, so its inverse map 
\begin{equation} 
(z_{k+1},\ldots,z_r) \mapsto 
z_1^\ast v_1+\cdots+z_k^\ast v_k
    +z_{k+1}v_{k+1}+\cdots+z_rv_r
\end{equation} 
determines unique quantities $z_1^\ast,\ldots,z_k^\ast$ 
for any $x_{k+1},\ldots,x_r\in \CC$ such that 
\eqref{eqn:fk} is satisfied, 
    and this furnishes the required value of 
    $z_k=z_k^\ast \in \CC$. 

Let $f_k = g_k - g_k(0)$. 
Let $(w_k)_k \subset \CC^r$ 
    be the dual basis to $(f_k)_k\subset \CC^r$. 
We conclude the values $z_1^\ast,\ldots,z_r^\ast$ are soluble 
    if and only if 
\begin{equation} 
    \begin{bmatrix}
        z_{k+1}(w_{k+1})&z_{k+1}(w_{k+2})&\cdots&z_{k+1}(w_{r})\\
        z_{k+2}(w_{k+1})&z_{k+2}(w_{k+2})&\cdots&z_{k+2}(w_{r})\\
        \vdots&\vdots&\ddots&\vdots\\
        z_{r}(w_{k+1})&z_{r}(w_{k+2})&\cdots&z_r(w_{r})\\
    \end{bmatrix}
\end{equation} 
is invertible for every $k = 1,\ldots,r$; 
equivalently, the matrix 
\begin{equation} 
    w_0
    \begin{bmatrix}
        z_{1}(w_{1})&z_{1}(w_{2})&\cdots&z_1(w_{r})\\
        z_{2}(w_{1})&z_{2}(w_{2})&\cdots&z_2(w_{r})\\
        \vdots&\vdots&\ddots&\vdots\\
        z_{r}(w_{1})&z_{r}(w_{2})&\cdots&z_{r}(w_{r})\\
    \end{bmatrix}
    w_0
\end{equation} 
has nonvanishing leading principal minors, 
where $w_0$ is the longest element of 
the symmetric group $S_r$. 
The subset of $\mathrm{GL}_r(\CC)$ 
with nonvanishing leading principal minors is $B^T B$, 
so the values $z_1^\ast,\ldots,z_r^\ast$ are soluble 
if and only if 
\begin{equation} 
    \frac{\partial f}{\partial z}=
    \frac{\partial g}{\partial z}=
    \begin{bmatrix}
        f_{1}(v_{1})&f_{1}(v_{2})&\cdots&f_1(v_{r})\\
        f_{2}(v_{1})&f_{2}(v_{2})&\cdots&f_2(v_{r})\\
        \vdots&\vdots&\ddots&\vdots\\
        f_{r}(v_{1})&f_{r}(v_{2})&\cdots&f_{r}(v_{r})\\
    \end{bmatrix}=
    \begin{bmatrix}
        z_{1}(w_{1})&z_{1}(w_{2})&\cdots&z_1(w_{r})\\
        z_{2}(w_{1})&z_{2}(w_{2})&\cdots&z_2(w_{r})\\
        \vdots&\vdots&\ddots&\vdots\\
        z_{r}(w_{1})&z_{r}(w_{2})&\cdots&z_{r}(w_{r})\\
    \end{bmatrix}^{-1}
\end{equation} 
    is in 
    $(w_0 B^T B w_0)^{-1}=B^TB$. 
\end{proof} 

\begin{corollary}
Let $\omega$ be a meromorphic top-degree form on $\CC^r$ 
and let $\gamma$ be a complete affine flag in $\CC^r$. 
The truncated residue of $\omega$ along $\gamma$ 
with respect to $z$ satisfies 
\begin{equation} 
\itres_z[\omega,\gamma]
=
\res_{z_{(r)}}[
\res_{z_{(r-1)}}[
    \cdots\res_{z_{(2)}}[\res_{z}[\omega,\gamma(1)]\,dz_2,\gamma(2)]\cdots 
, \gamma(r-1)], \gamma(r)].
\end{equation} 
\end{corollary}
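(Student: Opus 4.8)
The plan is to reduce the corollary to Proposition~\ref{prop:openBruhat} by an induction on $r$ that records exactly when the coordinate-wise residue process reproduces the abstract iterated residue and when it collapses to zero.

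First I would establish, by induction on $r$, the dichotomy
\[
\res_{z_{(r)}}\bigl[\cdots\res_{z_{(2)}}[\res_{z}[\omega,\gamma(1)]\,dz_2,\gamma(2)]\cdots,\gamma(r)\bigr]
=
\begin{cases}
\res[\omega,\gamma]&\text{if }z_1^\ast,\ldots,z_r^\ast\text{ are all soluble,}\\
0&\text{otherwise.}
\end{cases}
\]
The base case $r=1$ is one of the displayed identities preceding the proposition. For the inductive step: if $z_1^\ast$ is insoluble then $\res_z[\omega,\gamma(1)]=0$ and every residue further down the chain vanishes, so both sides are $0$; if $z_1^\ast$ is soluble then $\res_z[\omega,\gamma(1)]=\res[\omega,\gamma(1)]$, which, under the identification $\CC^{r-1}\xrightarrow{\sim}H_1$ fixed above, is a meromorphic top-degree form on $\CC^{r-1}$ in the coordinates $z_{(2)}$, while $\gamma(2)\supset\cdots\supset\gamma(r)$ is a complete transverse affine flag in $H_1\cong\CC^{r-1}$. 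The inductive hypothesis applied to this form and this flag, combined with $\res\bigl[\res[\omega,\gamma(1)],\gamma(2)\supset\cdots\supset\gamma(r)\bigr]=\res[\omega,\gamma]$, yields the dichotomy, once one observes that solubility of $z_2^\ast,\ldots,z_r^\ast$ for the original problem coincides with solubility of the corresponding values for the reduced problem on $H_1$ --- which is precisely how the iteration was defined (``replace $\omega,H_1,z$ by $\res_z[\omega,H_1],\,H_1\cap H_2,\,z_{(2)}$'').

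Then I would invoke Proposition~\ref{prop:openBruhat}: the values $z_1^\ast,\ldots,z_r^\ast$ are soluble if and only if the linearized flag $z\gamma_0$ lies in the open Bruhat cell, equivalently $p_1(z\gamma_0),\ldots,p_r(z\gamma_0)\neq 0$. Feeding this equivalence into the dichotomy above turns its right-hand side into exactly the defining cases of $\itres_z[\omega,\gamma]$, which is the assertion of the corollary.

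The step I expect to need the most care is not any of the residue manipulations but the bookkeeping around the reduction to $H_1$: that ``soluble'' is intrinsic (independent of the chosen local defining data), that forming $\res_z[\omega,\gamma(1)]$ on $H_1$ matches the remaining solubility conditions with those of an honest $(r-1)$-dimensional instance of the same problem, and that ``not all $z_k^\ast$ soluble'' is genuinely the complement of ``all soluble'' rather than some intermediate possibility. All of this is routine once the identification $\CC^{r-1}\cong H_1$ and the coordinates $z_{(2)}$ are fixed; the genuine mathematical content has already been isolated in Proposition~\ref{prop:openBruhat}.
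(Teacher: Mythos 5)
Your proof is correct and follows the paper's intended route: the paper leaves the corollary without a separate proof precisely because the iterative dichotomy is set up in the ``Solubility'' subsection and the identification of ``all $z_k^\ast$ soluble'' with the open Bruhat cell (equivalently, nonvanishing of $p_1(z\gamma_0),\ldots,p_r(z\gamma_0)$) is exactly Proposition~\ref{prop:openBruhat}. Your explicit induction and the care about the insoluble cases just make precise what the paper treats as immediate.
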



\section{Minors} 

In this section we define 
    the conditions of stability and compatibility. 
We specialize to a meromorphic $r$-form on $V_\CC$ 
    with hyperplane singularities: 
\begin{equation}\label{eqn:omegaForm} 
    \omega= \frac{h(z)\,dz}{g_1(z)\cdots g_R(z)}
\end{equation} 
for affine functions $g_1,\ldots,g_R\colon V_\CC \to \CC$ 
    which are nonvanishing on $V$ 
    and a function $h$ which is holomorphic on a neighborhood 
    of $V$ in $V_\CC$. 
We let $Z$ denote the set of all $r$-step complete flags 
    formed from the polar hyperplanes of $\omega$. 
Flags in $Z$ will be called \defn{poles} of $\omega$. 

\begin{lemma}\label{lemma:elementary}
    Any affine hyperplane $H \subset V_\CC$ 
    disjoint from $V$ may be expressed as 
    $$H = \{v \in V_\CC : f(v) = is\}$$ 
    for some real linear form $f\colon V \to \RR$ 
    and constant $s$ satisfying $\mathrm{Re}\,s> 0$. 
If $H=\{f'(v) = is'\}$ for another real linear form $f'$ 
and constant $s'$ with positive real part, 
    then $f'=cf$ and $s'=cs$ for some positive real number $c$. 
\end{lemma}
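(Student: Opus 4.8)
The plan is to strip a defining equation of $H$ down to a single real linear equation. First I would pick a nonzero $\CC$-linear form $\ell$ on $V_\CC$ and a constant $c \in \CC$ with $H = \{v \in V_\CC : \ell(v) = c\}$; here $(\ell, c)$ is determined up to a common factor in $\CC^\times$. Since a $\CC$-linear form on $V_\CC$ is the same as an $\RR$-linear map $V \to \CC$, I can write $\ell|_V = f_1 + i f_2$ with $f_1, f_2 \in \mathrm{Hom}_\RR(V,\RR)$, so that $\ell = f_1 + i f_2$ as $\CC$-linear forms once $f_1, f_2$ are extended $\CC$-linearly to $V_\CC$. Writing $c = a + ib$ with $a, b \in \RR$, the hypothesis $H \cap V = \emptyset$ says precisely that the real affine system $f_1(x) = a$, $f_2(x) = b$ has no solution $x \in V$.

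The one step that is not bookkeeping is to extract from this inconsistency that $f_1$ and $f_2$ are $\RR$-linearly dependent: if they were independent then $(f_1, f_2)\colon V \to \RR^2$ would have rank $2$, hence be surjective, and the system would be solvable. They are not both zero, since $\ell \neq 0$, so their span is one-dimensional; picking a nonzero generator $f_0$, I would write $f_1 = p f_0$ and $f_2 = q f_0$ with $(p,q) \neq (0,0)$, so $\ell = (p + iq) f_0$ with $p + iq \in \CC^\times$. Dividing the defining equation by $p + iq$ presents $H$ as $\{v \in V_\CC : f_0(v) = u + iw\}$, where $u + iw \coloneqq (a+ib)/(p+iq)$, and the absence of a real solution now says exactly that $w \neq 0$. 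A last rescaling settles existence: $f \coloneqq f_0/w$ is a real linear form, $s \coloneqq (w - iu)/w = 1 - iu/w$ has $\mathrm{Re}\,s = 1 > 0$, and since $isw = i(w - iu) = u + iw$ the equation $f(v) = is$ is equivalent to $f_0(v) = u + iw$, so $H = \{v \in V_\CC : f(v) = is\}$. (Conversely, any hyperplane of this form is disjoint from $V$: for $x \in V$ the value $f(x)$ is real, while $is \notin \RR$ because $\mathrm{Re}\,s \neq 0$.)

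For uniqueness, suppose $H = \{f(v) = is\} = \{f'(v) = is'\}$ with $f, f'$ real linear forms and $\mathrm{Re}\,s, \mathrm{Re}\,s' > 0$; note $f, f' \neq 0$, since $s, s' \neq 0$ and $H$ is a hyperplane. By uniqueness of the defining equation up to $\CC^\times$, there is $\mu \in \CC^\times$ with $f' = \mu f$ and $is' = \mu \cdot is$, hence $s' = \mu s$. Restricting $f' = \mu f$ to $V$ and evaluating at a point where $f$ does not vanish shows $\mu \in \RR$, and then $\mathrm{Re}\,s' = \mu\,\mathrm{Re}\,s$ with both real parts positive forces $\mu > 0$; taking $c = \mu$ gives $f' = cf$ and $s' = cs$. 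The main obstacle here is conceptual rather than computational: recognizing at the outset that disjointness of $H$ from $V$ is equivalent to the $\RR$-linear dependence of the real and imaginary parts of the linear term of a defining equation. Everything downstream of that observation is elementary scalar arithmetic.
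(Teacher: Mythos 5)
Your proof is correct. The paper itself gives no proof of this lemma (it is stated and then invoked with the parenthetical ``which is elementary''), and your argument --- that disjointness of $H$ from $V$ forces the real and imaginary parts of the linear term to be $\RR$-linearly dependent, after which everything is rescaling --- is exactly the intended elementary observation, with the uniqueness clause handled correctly via the standard fact that a defining equation of a hyperplane is unique up to a scalar in $\CC^\times$.
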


By the lemma (which is elementary) we express 
each polar hyperplane as $H_k = \{g_k = 0\}$ 
    for $g_k = f_k - is_k$ 
    with such $f_k,s_k$. 
This presentation will be needed since 
    stability and compatibility 
    are only defined for real matrices. 


\subsubsection{Polyhedra} 

Let $z \colon V \xrightarrow{\sim} \RR^r$ be 
an $\RR$-linear isomorphism. 
Let $\Pi \subset V_\CC$ 
    be the subset which corresponds to 
    the product of the closed upper half-planes in $\CC^r$ 
    under the induced $\CC$-linear isomorphism 
    $z_\CC \colon V_\CC \xrightarrow{\sim} \CC^r$. 
Such a set $\Pi$ will be called 
    a \defn{polyhedron (with boundary $V$)}. 
Any polyhedron $\Pi$ may be uniquely expressed as 
    $$\Pi = V + i \Theta$$ 
    where $\Theta$ is the {cone} 
    $\Theta = \RR_{\geq 0} v_1 + \cdots + \RR_{\geq 0} v_r$ 
    for the dual basis $(v_1,\ldots,v_r)\subset V$ to $z$. 


\subsection{Stability and compatibility}\label{sec:Minors} 

For an $r\times r$ real matrix $J = (a_{ij})$, 
consider the $k$th leading principal minor 
\begin{equation} 
   p_k= 
   \det
    \begin{bmatrix}
        a_{11}&\cdots&a_{1k}\\ 
        \vdots& \ddots&\vdots\\
        a_{k1}&\cdots&a_{kk}\\ 
    \end{bmatrix}
    \qquad(k \in \{1,\ldots,r\}), 
\end{equation} 
the $k\times k$ minor 
\begin{equation} 
    q_{k\ell}=\det 
    \begin{bmatrix}
        a_{11}&\cdots&a_{1,k-1}&a_{1\ell}\\ 
        \vdots& \ddots&\vdots&\vdots\\
        a_{k1}&\cdots&a_{k,k-1}&a_{k\ell}\\ 
    \end{bmatrix}
    \qquad(\ell \in\{ k+1,\ldots,r\}), 
\end{equation} 
and the $(k-1)\times(k-1)$ minor 
\begin{equation} 
    r_{jk}= \det 
    \begin{bmatrix}
        a_{11}&\cdots&a_{1,k-1}\\
        \vdots&\ddots&\vdots\\
        a_{j-1,1}&\cdots&a_{j-1,k-1}\\
        a_{j+1,1}&\cdots&a_{j+1,k-1}\\
        \vdots&\ddots&\vdots\\
        a_{k1}&\cdots&a_{k,k-1}\\ 
    \end{bmatrix}
    \qquad
    (j \in \{1,\ldots,k-1\}). 
\end{equation} 


Now we formulate the key definitions of the paper. 

\begin{definition}
Let $J$ be a $k \times r$ real matrix. 
We say $J$ is \defn{stable} if 
    $$\text{$p_1,\ldots,p_k>0$ 
    and $(-1)^{\ell-j}r_{j\ell} \geq 0$ 
    for all $1\leq j < \ell \leq k$.}$$ 
We say $J$ is \defn{compatible} if 
    either it is not stable or 
\begin{equation} 
    q_{j\ell} \leq 0
        \,\,\,\text{for all $1 \leq j < \ell \leq k$.}
\end{equation} 
Let $H=(H_1,\ldots,H_k)$ be a collection of affine hyperplanes 
    defined by real linear forms $(f_1,\ldots,f_k)$ 
    and constants $(s_1,\ldots,s_k)$ with positive real parts. 
Let $\Pi=z^{-1}_\CC(\overline{\mathbb H}^r)$ be a polyhedron. 
Let $\partial f/\partial z$ be the Jacobian matrix of 
    $f=(f_1,\ldots,f_k) \colon V \to \RR^k$ 
    with respect to $z$-coordinates. 
We say $H$ is \defn{$\Pi$-stable} 
    (resp.~\defn{$\Pi$-compatible}) 
    if $\partial f/\partial z$ is 
    stable (resp.~compatible). 
\end{definition}



\begin{remark} 
The signs of $p_k(J)$ and $q_{k\ell}(J)$ 
    are invariant under $J \mapsto \ell J$ 
    for any $$\ell \in B_+^T = 
    \{\text{lower-triangular with positive diagonal}\}
    \subset \GL_r(\RR)
    ,$$ 
    while the signs of the $r_{jk}$ minors are invariant under 
    the subgroup of $B_+^T$ stabilizing the $j$th coordinate axis. 
The signs of these minors are therefore 
    invariant under the diagonal subgroup of $B_+$. 
Thus the conditions of 
    $\Pi$-stability and $\Pi$-compatibility 
    do not depend on the real forms $f$ 
    used to express each hyperplane as 
    $H=\{f(v) = is\}$ with $\mathrm{Re}\,s>0$. 
Note the condition of $\Pi$-compatibility makes sense for flags 
    however $\Pi$-stability does not. 
\end{remark} 



\begin{remark}
It appears likely that if $J\in\GL_r(\RR)$ 
is stable and $w\in S_r$ is a nontrivial permutation, 
    then $wJ$ is unstable. 
When this holds for every Jacobian of a given integral, 
    the residue formula simplifies since 
    each unordered set 
    $\{H_1,\ldots,H_r\}$ of hyperplanes 
    which cuts out a flag 
    for \emph{some} ordering 
    in fact has a \emph{canonical} ordering 
    $H$ which is $\Pi$-stable. 
\end{remark}


\subsection{The iterated residue expansion} 

Here we formulate a convergence condition 
    which will be clearly seen as necessary 
    for the method of iterated residues 
    to be applicable. 
Write $\omega = G(z)\,dz_1 \wedge \cdots \wedge dz_r$. 
Fix arbitrary $x_2,\ldots,x_r \in \RR$. 
At the first step, we require that 
\begin{equation} 
    \int_\RR G(z_1,x_2,\ldots,x_r) \, dz_1
    =
    \lim_{R \to \infty}
    \int_{C_R} G(z_1,x_2,\ldots,x_r) \, dz_1
\end{equation} 
where $C_R \subset \overline{\mathbb H}$ is 
    the positively oriented centered semi-circle of radius $R$ 
    resting on the real axis. 
By Cauchy's residue formula, 
\begin{equation} 
    \left(\int_\RR G(z_1,x_2,\ldots,x_r) \, dz_1\right)
    dz_2 \wedge \cdots \wedge dz_r
    =(2\pi i)\sum_{H}\res_z[\omega, H](x_2,\ldots,x_r)
\end{equation} 
where the sum extends over 
those polar hyperplanes $H$ 
for which $z_1^\ast$ is not only soluble 
but is in the upper half-plane. 
At the second step we write 
    $\res_z[\omega,H]=G_{H}(z_{(2)}) 
    \,dz_2 \wedge \cdots \wedge dz_r$ 
    and require for any such hyperplane $H$ that 
\begin{equation} 
    \int_\RR G_{H}(z_2,x_3,\ldots,x_r) \, dz_2
    =
    \lim_{R \to \infty}
    \int_{C_R} 
    G_{H}(z_2,x_3,\ldots,x_r) \, dz_2. 
\end{equation} 
By Cauchy's residue formula, 
\begin{equation} 
    \left(\int_\RR\int_\RR G(z_1,z_2,x_3,\ldots,x_r) \, dz_1 dz_2\right)
    dz_3 \wedge \cdots \wedge dz_r
    =(2\pi i)^2
    \sum_{H \supset H'}\itres_z[\omega, H \supset H \cap H'](x_3,\ldots,x_r)
\end{equation} 
where the sum is over those $2$-step polar flags 
$H \supset H \cap H'$ 
which are soluble and for which 
$$z_1^\ast(x_2,\ldots,x_r),
z_2^\ast(x_3,\ldots,x_r) \in \mathbb H.$$ 

\begin{definition} 
Assume that $h$ has an analytic continuation to 
    a neighborhood of $\Pi$ in $V_\CC$ 
    and the integral $\int_V \omega$ is absolutely convergent. 
For any polar flag $\gamma : H_1 \supset \cdots \supset H_1 \cap \cdots \cap H_{k-1}$ of $\omega$ 
    with $2\leq k \leq r$ 
    let $z_{(k)}=(z_{k},\ldots,z_r)$ and write 
    $\res[\omega,\gamma]=G_\gamma(z_{(k)}) \, dz_k \wedge \cdots \wedge dz_r$. 
The 
    \defn{iterated residue expansion of 
    the integral $\int_V\omega$ 
    along the polyhedron $\Pi$
    converges} 
    if for 
    every $x_2,\ldots,x_r \in \RR$ 
    and every polar flag 
    $\gamma: H_1 \supset \cdots 
    \supset H_1 \cap \cdots \cap H_{k-1}$ 
    arising in the iterated residue expansion 
    with $2\leq k\leq r$ 
    we have that 
\begin{equation} 
    \int_\RR 
    G_\gamma(z_{k},x_{k+1},\ldots,x_r) 
    \, dz_{k}
    =
    \lim_{R \to \infty}
    \int_{C_R} 
    G_\gamma(z_{k},x_{k+1},\ldots,x_r) 
        \, dz_{k}.
\end{equation} 
\end{definition} 

When this holds, 
it follows directly from Cauchy's residue formula that 
\begin{equation} 
    \frac{1}{(2\pi i)^{r}}
    \int_{V} \omega 
    = 
    \sum_{\gamma\in Z_\ast}
        \itres_z[\omega,\gamma] 
\end{equation}   
where $Z_\ast \subset Z$ 
is the distinguished subset of flags $\gamma$ 
which actually arise in 
the iterated residue expansion along $\Pi$. 
Since such flags must be soluble, we have that 
\begin{equation} 
    \sum_{\gamma\in Z_\ast}
        \itres_z[\omega,\gamma] 
    = 
    \sum_{\gamma\in Z_\ast}
        \res[\omega,\gamma] .
\end{equation}   

The problem is to determine $Z_\ast$ in terms of $\Pi$. 
Naively one might expect that 
    $Z_\ast=\{\gamma \in Z : \gamma(r) \in \Pi\}$ 
    but there are forms $\omega$ and polyhedra $\Pi$ 
    such that $\int_V \omega$ 
    has a convergent iterated residue expansion 
    along $\Pi$ 
    and $\int_V \omega \neq 0$ 
    and $\{\gamma \in Z: \gamma(r) \in \Pi\} = \varnothing$ 
    (\S\ref{sec:Example}).  
This occurs because the iterated residue expansion 
    along $\Pi$ may pick up poles 
    \emph{outside} of $\Pi$ 
    (Figure~\ref{fig:ExampleLeavingPolyhedron}). 
The compatibility condition on minors 
    prevents this from happening, 
    and is the minimal condition 
    which does so by Proposition~\ref{prop:converse}. 




\section{Stability}\label{sec:lemma} 

Here we study the behavior of residues under 
    a variation in the hyperplane parameters $s$. 
We write $\omega_s(z)$ for the form \eqref{eqn:omegaForm} 
with respect to some presentation $g_k = f_k-is_k$ 
(Lemma~\ref{lemma:elementary}) 
regarding the $s_1,\ldots,s_R$ as parameters. 
It turns out that some flags 
    do not contribute to the iterated residue expansion 
    for a non-empty open subset of parameters $s$ 
    whereas others contribute irrespective of $s$. 
We prove the latter flags are precisely 
    those cut out by a $\Pi$-stable collection of hyperplanes. 

\begin{lemma}\label{lemma}
Let $H_1,\ldots,H_k$ be 
    linearly independent polar hyperplanes. 
The flag 
    $\gamma(s): H_1 \supset H_1 \cap H_2 
    \supset \cdots \supset H_1 \cap \cdots \cap H_k$ 
    arises in the iterated residue expansion of 
    $\omega_s(z)$ 
    for all parameters $s$ with 
    $\mathrm{Re}\,s_1,\ldots,\mathrm{Re}\,s_R >0$ 
    if and only if $H=(H_1,\ldots,H_k)$ 
    is $\Pi$-stable. 
\end{lemma}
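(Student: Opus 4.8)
The plan is to analyze the iterated residue expansion step by step, tracking at each stage whether the relevant solubility condition and upper-half-plane condition hold, and to identify these conditions with the sign conditions defining $\Pi$-stability. By Proposition~\ref{prop:openBruhat} and the Corollary, the flag $\gamma(s)$ is soluble (so that $\itres_z[\omega_s,\gamma(s)] = \res[\omega_s,\gamma(s)]$) precisely when $\partial f/\partial z$ lies in $B^T B$, i.e.\ when the leading principal minors $p_1,\ldots,p_k$ are all nonzero. So the solubility half of ``arising in the expansion'' already forces the minors $p_j$ to be nonzero; what remains is to pin down the signs, and to handle the upper-half-plane requirement at each step. Since solubility is a closed condition on $s$ (it does not depend on $s$ at all, only on the linearized flag), the real content is: for the flag to appear \emph{for all} $s$ with positive real parts, the pole $z_k^\ast$ produced at step $k$ must lie in the open upper half-plane $\mathbb{H}$ for all such $s$ and all real $x_{k+1},\ldots,x_r$.

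First I would set up coordinates so that $z$ is the identity on $\RR^r$ and write $g_k = f_k - i s_k$, with $J = \partial f/\partial z$ having rows $f_1,\ldots,f_k$. At the first step, $z_1^\ast$ is the unique solution of $f_1(z_1^\ast, x_2,\ldots,x_r) = i s_1$; solving, $z_1^\ast = (i s_1 - \sum_{j\geq 2} a_{1j} x_j)/a_{11}$, so $\mathrm{Im}\, z_1^\ast = \mathrm{Re}(s_1)/a_{11}$ (plus a term from $\mathrm{Im}\,s_1$, but $x_j$ are real and we may vary $x_j$ over all of $\RR$, so the condition ``$\mathrm{Im}\,z_1^\ast > 0$ for all $x_2,\ldots,x_r$'' forces exactly $a_{11} = p_1 > 0$). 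The inductive step is the crux: after performing the first $k-1$ residues, $\res[\omega,\gamma(k-1)]$ is a meromorphic form in $z_{(k)} = (z_k,\ldots,z_r)$, and the $k$th pole $z_k^\ast$ is determined by requiring the point to lie on $\gamma(k)$. Solving the linear system expressing $\gamma(k)$ in terms of $z_k,\ldots,z_r$ via Cramer's rule, $\mathrm{Im}\, z_k^\ast$ is (up to a positive scalar coming from real $x_j$'s that we can freely vary) a ratio whose numerator is a signed combination of $\mathrm{Re}\, s_1,\ldots,\mathrm{Re}\,s_k$ with coefficients given by the $r_{jk}$-type minors, and whose denominator is $p_k/p_{k-1}$. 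The requirement that $\mathrm{Im}\, z_k^\ast > 0$ for \emph{all} $s$ with positive real parts translates into: each coefficient of $\mathrm{Re}\, s_j$ has the same sign as (the sign of) $p_k/p_{k-1}$, which—after checking the diagonal coefficient of $\mathrm{Re}\,s_k$—gives $p_k/p_{k-1} > 0$ (hence inductively $p_k > 0$) together with the sign conditions $(-1)^{k-j} r_{jk} \geq 0$, i.e.\ precisely $\Pi$-stability. For the converse, one checks that if $J$ is $\Pi$-stable then at every step the pole lies in the open upper half-plane for every admissible $s$, so the flag genuinely contributes.

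I expect the main obstacle to be the bookkeeping in the inductive step: expressing $z_k^\ast$ explicitly after $k-1$ prior residues, and identifying the coefficients in $\mathrm{Im}\, z_k^\ast$ with the minors $r_{jk}$ and the ratio $p_k/p_{k-1}$. One clean way to organize this is to change coordinates on $\gamma(k-1)_0$ using the dual basis $(w_1,\ldots,w_k)$ to $(f_1,\ldots,f_k)$ as in the proof of Proposition~\ref{prop:openBruhat}, so that the equation $f_k(\cdot) = i s_k$ restricted to $\gamma(k-1)$ becomes affine-linear in the remaining free coordinates with explicitly computable coefficients; Cramer's rule then expresses these in terms of minors of $J$, and the signs of those minors are exactly $p_j$, $q_{jk}$, $r_{jk}$. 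The identification that the denominator is $p_k/p_{k-1}$ is the standard relation between the pivot of an $LU$-step and ratios of leading principal minors; the identification of the numerator coefficients with $(-1)^{k-j} r_{jk}$ requires a careful Laplace-expansion argument, which is the one genuinely computational lemma. A secondary subtlety is making rigorous the claim ``holds for all real $x_j$ $\iff$ the $x_j$-dependent part vanishes and the constant ($s$-dependent) part has the right sign'': this is because the map $(x_{k+1},\ldots,x_r) \mapsto \mathrm{Im}\, z_k^\ast$ is affine, and an affine function on $\RR^{r-k}$ is bounded below by a positive constant iff it is constant and positive—so the $x$-linear part must vanish identically, leaving a condition purely on the $s_j$. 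Once these two computations are in place, the equivalence with $\Pi$-stability is immediate from the definition, and the ``for all $s$'' quantifier is what converts a family of real inequalities into the closed sign conditions $(-1)^{\ell-j} r_{j\ell} \geq 0$.
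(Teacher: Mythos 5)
Your proposal is correct and follows essentially the same route as the paper: the paper derives exactly the explicit formula you predict, namely $z_k^\ast= p_k^{-1}\bigl(i(s_kp_{k-1}-s_{k-1}r_{k-1,k}+\cdots+(-1)^{k-1}s_1r_{1k})-x_{k+1}q_{k,k+1}-\cdots-x_rq_{kr}\bigr)$ (equation \eqref{eqn:zkFormula}), obtained by the same Cramer-type computation in the dual basis $(w_j)$ organized via wedge products, and then reads off $p_k>0$ and $(-1)^{k-j}r_{jk}\geq 0$ from positivity of $\mathrm{Im}\,z_k^\ast$ for all admissible $s$. The only minor imprecisions are that the $x$-dependent part of $\mathrm{Im}\,z_k^\ast$ vanishes automatically (the $q_{k\ell}$ and $p_k$ are real) rather than being a condition to impose, and $\mathrm{Im}(is_1/a_{11})=\mathrm{Re}(s_1)/a_{11}$ exactly, with no residual $\mathrm{Im}\,s_1$ term.
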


\begin{example} 
For a two-dimensional integral 
    with two polar hyperplanes $H_1$ and $H_2$, 
    the pole at $H_1 \cap H_2$ 
    is given by $m = z_1^\ast v_1 + z_2^\ast v_2$ where 
\begin{align} 
    z_1^\ast &= p_1^{-1}(is_1-z_2^\ast q_{12}),\\
    z_2^\ast &= ip_2^{-1}(p_1s_2-s_1r_{12}).
\end{align} 
The flag $H_1 \supset H_1 \cap H_2$ 
    arises in the iterated residue expansion 
    for all $s_1,s_2$ with positive real parts if and only if 
    $p_1,p_2>0$ and $r_{12} \leq 0$. 
For instance, consider the form 
\begin{equation} 
    \omega = \frac{dx \wedge dy}{(x^2+s_1^2)((x+y)^2+s_2^2)}
\end{equation} 
    with polar hyperplanes 
    $H_1 = \{x=is_1\}$, 
    $H_2 = \{x+y=is_2\}$, 
    $H_3 = \{-x=is_1\}$, 
    $H_4 = \{-x-y=is_2\}$. 
The Jacobian of $H=(H_1,H_2)$ with respect to 
    the standard polyhedron $\overline{\mathbb H}^2$ is 
\begin{equation} 
    J_H = \begin{bmatrix}
        1&0\\1&1
    \end{bmatrix},
    \qquad
    p_1=1,\,\,\,
    p_2=1,\,\,\,
    r_{12}=1,\,\,\,
    q_{12}=0\implies 
    \text{$J_H$ is unstable}.
\end{equation} 
The flag $\gamma_H(s) : H_1 \supset H_1 \cap H_2$ 
    contributes to 
    the iterated residue expansion of $\omega$ 
    along $\overline{\mathbb H}^2$ if and only if 
    $\mathrm{Re}\,s_2 \geq \mathrm{Re}\,s_1$. 
\end{example} 

\begin{proof} 
Set $\gamma = \gamma_H(s)$. 
The flag $\gamma$ 
    arises in the iterated residue expansion 
    of $\omega$ along $\Pi$ 
    if and only if 
    the values $z_1^\ast,\ldots,z_k^\ast$ 
    in the iterated residue expansion are soluble 
    and in $\mathbb H$ (cf.~\eqref{eqn:fk}) . 
We will show this occurs 
    for all $s_1,\ldots,s_R>0$ 
    if and only if $H=(H_1,\ldots,H_k)$ is $\Pi$-stable. 
Let $(w_k)_k \subset V$ be the dual basis to 
    $(f_k)_k\subset V^\vee$. 
Let $v=z_1^\ast v_1+\cdots+z_k^\ast v_k
    +x_{k+1}v_{k+1}+\cdots+x_rv_r$ 
and consider the following vector 
$$v^k \coloneq 
    (-1)^{k-1}
    v_1 \wedge \cdots \wedge v_{k-1}
    \wedge w_{k+1} \wedge \cdots \wedge w_r
    \in\wedge^{r-1} V.
$$ 
One finds that 
\begin{equation} 
    v \wedge v^k = 
    (z_k^\ast p_k +x_{k+1}q_{k,k+1}+\cdots+x_rq_{kr} )
    w_1 \wedge \cdots \wedge w_r\in \wedge^{r} V
\end{equation} 
and for any integer $j \leq k$ that 
\begin{equation} 
    w_j \wedge v^k = (-1)^{k-j}r_{jk}
    w_1 \wedge \cdots \wedge w_r \in \wedge^{r} V.
\end{equation} 
Thus 
$v-is_1w_1 - \cdots -is_kw_k \in 
\ker(v \mapsto v\wedge v^k)
=\mathrm{span}(v_1,\ldots,v_{k-1},w_{k+1},\ldots,w_r)$
if and only if 
\begin{equation}\label{eqn:zkFormula} 
    z_k^\ast= p_k^{-1}(i(s_kp_{k-1}-s_{k-1}r_{k-1,k}+\cdots+
    (-1)^{k-1}s_1r_{1k})
    -x_{k+1}q_{k,k+1}-\cdots-x_rq_{kr}). 
\end{equation} 
Since the coefficients of $v_1,\ldots,v_{k-1}$ are 
    determined by earlier residues, 
    and changing $v$ by multiples of $w_{k+1},\ldots,w_r$ 
    does not affect the condition that 
    $v \in H_1 \cap \cdots \cap H_k$, 
    the element $v-is_1w_1 - \cdots - is_kw_k$ 
    is in $\ker(v \mapsto v\wedge v^k)$ 
    if and only if $v \in H_1 \cap \cdots \cap H_k$. 
The rest follows by induction on $k$. 
\end{proof} 


The meaning of stability and compatibility 
    can now be summarized. 
Let $\CC^R_+ \subset \CC^R$ 
    denote the half-space with positive real parts. 
Again we write $\omega_s(z)$ for the form \eqref{eqn:omegaForm} 
with respect to some presentation $g_k = f_k-is_k$ 
(Lemma~\ref{lemma:elementary}) 
regarding the $s_1,\ldots,s_R$ as parameters in $\CC_+$. 

\begin{corollary}
Let $\gamma$ be a complete affine flag 
    whose linearization $z\gamma_0$ 
    is in the open Bruhat cell 
    $B^T \, \backslash \,B^T B \subset \mathscr F$. 
Let $z_k^\ast=z_k^\ast(x,s)$ 
    be given\footnote{Note $z_k^\ast$ only depends 
    on $\gamma$ and not 
    the particular collection of hyperplanes 
    used to define $\gamma$. 
    } 
    by \eqref{eqn:zkFormula}. 
Fix some $s_0 \in \CC^R_+$. 
Let $\Pi=z_\CC^{-1}(\overline{\mathbb H}^r)$ 
    be a polyhedron. 
    \begin{enumerate}
        \item The flag $\gamma$ arises in the iterated residue expansion of $\omega=\omega_{s_0}$ along $\Pi$ if and only if 
            $z_1^\ast(x,s_0),\ldots,z_r^\ast(x,s_0)$ 
            are in $\mathbb H$ 
            for all $x \in \RR^{r-1}$ 
            (resp. for some $x \in \RR^{r-1}$). 
        \item The flag $\gamma$ arises in the iterated residue expansion of $\omega_s$ along $\Pi$ for all $s \in \CC^R_+$ if and only if 
            $z_1^\ast(x,s),\ldots,z_r^\ast(x,s)$ are in 
            $\mathbb H$ 
            for all $s \in \CC^R_+$ 
            and $x \in \RR^{r-1}$ 
            (resp. some $x \in \RR^{r-1}$) 
            if and only if $\gamma$ is cut out by 
            a $\Pi$-stable collection of polar hyperplanes. 
        \item Assume $\gamma$ is $\Pi$-compatible. 
            The flag $\gamma$ arises in 
            the iterated residue expansion of 
            $\omega=\omega_{s_0}$ along $\Pi$ 
            if and only if 
            $z_1^\ast(x,s_0),\ldots,z_r^\ast(x,s_0)$ 
            are in $\mathbb H$ 
            for all $x \in \overline{\mathbb H}^{r-1}$. 
        \item Assume $\gamma$ is $\Pi$-compatible. 
            The flag $\gamma$ arises in 
            the iterated residue expansion of $\omega_{s}$ 
            along $\Pi$
            for all $s \in \CC^R_+$ 
            if and only if 
            $z_1^\ast(x,s),\ldots,z_r^\ast(x,s)$ 
            are in $\mathbb H$ 
            for all $s \in \CC^R_+$ and 
            $x \in \overline{\mathbb H}^{r-1}$ 
            if and only if $\gamma$ is cut out by 
            a $\Pi$-stable collection of polar hyperplanes. 
    \end{enumerate}
\end{corollary}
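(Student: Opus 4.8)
The plan is to deduce all four statements from three facts already in hand: Proposition~\ref{prop:openBruhat}, which identifies solubility of $z_1^\ast,\dots,z_r^\ast$ with membership of $z\gamma_0$ in the open Bruhat cell; the closed form \eqref{eqn:zkFormula} for $z_k^\ast$; and Lemma~\ref{lemma}. Under the standing hypothesis $z\gamma_0\in B^T\backslash B^TB$, solubility holds automatically, so by the description of the iterated residue expansion given above, $\gamma$ arises in it along $\Pi$ (for a given $x$ and $s$) precisely when at every step $k$ the value $z_k^\ast$ lies in the open upper half-plane $\mathbb H$. Recording this reduction, \enquote{$\gamma$ arises} becomes \enquote{$z_1^\ast,\dots,z_r^\ast\in\mathbb H$}.

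I would then read off imaginary parts from \eqref{eqn:zkFormula}. Since the minors $p_k,q_{k\ell},r_{jk}$ are real, that formula gives $\operatorname{Im}z_k^\ast(x,s)=p_k^{-1}\bigl(\operatorname{Re}(s_kp_{k-1}-s_{k-1}r_{k-1,k}+\dots+(-1)^{k-1}s_1r_{1k})-\sum_{\ell>k}(\operatorname{Im}x_\ell)q_{k\ell}\bigr)$. For $x\in\RR^{r-1}$ the right-hand side does not involve $x$, so \enquote{$z_1^\ast,\dots,z_r^\ast\in\mathbb H$ for all real $x$}, \enquote{$\dots$ for some real $x$}, and \enquote{$\gamma$ arises} all coincide; this is part~(1). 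Letting $s_0$ range over $\CC^R_+$ gives the first equivalence of part~(2), while the second equivalence of part~(2) --- arising for all $s$ being equivalent to being cut out by a $\Pi$-stable collection --- is exactly Lemma~\ref{lemma}, applied to any collection of polar hyperplanes cutting out $\gamma$; this is legitimate because, as the footnote records, the $z_k^\ast$, and hence the arising condition, depend only on $\gamma$.

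For parts~(3) and~(4) the direction \enquote{$\Leftarrow$} is immediate: since $\RR^{r-1}\subset\overline{\mathbb H}^{r-1}$, it follows from parts~(1)--(2). For \enquote{$\Rightarrow$} the key point is a monotonicity read off from the displayed formula: when $p_k>0$ and $q_{k\ell}\le 0$, $\operatorname{Im}z_k^\ast(x,s)$ is nondecreasing in each $\operatorname{Im}x_\ell\ge 0$, hence stays positive throughout $\overline{\mathbb H}^{r-1}$ once it is positive for real $x$. If $\gamma$ is cut out by a $\Pi$-stable collection, then $p_k>0$ together with $(-1)^{\ell-j}r_{j\ell}\ge 0$ makes the real-part term strictly positive for every $s\in\CC^R_+$, and $\Pi$-compatibility then forces $q_{k\ell}\le 0$, so the monotonicity applies and the complex-$x$ condition holds. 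Combined with Lemma~\ref{lemma} this settles part~(4) in full, as well as the $\Pi$-stable case of part~(3).

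The remaining case of part~(3) --- a $\Pi$-compatible flag that is not $\Pi$-stable yet arises for the single parameter $s_0$ --- is where I expect the real difficulty. Here $\Pi$-compatibility is vacuous and yields no sign information on the $q_{k\ell}$, so the monotonicity cannot be invoked directly. The plan is to show that for such a $\gamma$ the convergence of the iterated residue expansion --- which \enquote{arises in the iterated residue expansion} presupposes --- already forces the stable sign pattern $p_k>0$, $q_{k\ell}\le 0$ on the minors relevant to $\gamma$: at step $k$ the residue substitutes $z_1^\ast,\dots,z_{k-1}^\ast$, regarded via \eqref{eqn:zkFormula} as affine functions of $(z_k,x_{k+1},\dots,x_r)$, into the argument of $h$, and for the semicircular contour at infinity to contribute nothing these substituted arguments must stay in the domain of analyticity of $h$ --- a neighborhood of $\Pi$ --- as $z_k$ runs over $\overline{\mathbb H}$ with the remaining variables real; examining their imaginary parts exactly as above forces $p_k>0$ and $q_{k\ell}\le 0$, after which the monotonicity closes the argument. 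Making this last step rigorous --- pinning down precisely which analyticity is available and handling the borderline configurations in which some minor vanishes --- is the part I anticipate will need the most care.
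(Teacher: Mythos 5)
Your reduction of \enquote{$\gamma$ arises} to \enquote{$z_1^\ast,\ldots,z_r^\ast\in\mathbb H$}, your reading of imaginary parts from \eqref{eqn:zkFormula}, and your treatment of parts (1), (2), (4) and of part (3) for flags cut out by a $\Pi$-stable collection are all correct, and this is the route the paper intends: the corollary carries no separate proof and is offered as a direct consequence of \eqref{eqn:zkFormula} and Lemma~\ref{lemma}. Your monotonicity observation --- $p_k>0$ and $q_{k\ell}\le 0$ make $\mathrm{Im}\,z_k^\ast$ nondecreasing in each $\mathrm{Im}\,x_\ell\ge 0$, while $p_{k-1}>0$ and $(-1)^{k-j}r_{jk}\ge 0$ make $\mathrm{Re}(s_kp_{k-1}-s_{k-1}r_{k-1,k}+\cdots)$ positive for every $s\in\CC^R_+$ --- is the entire content of parts (3)--(4) in that case.

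The difficulty you flag in the remaining case of (3) is genuine, but your proposed repair cannot work, and in fact no repair is possible as the statement is literally written. \enquote{Arises in the iterated residue expansion} is, per the proof of Lemma~\ref{lemma}, the purely algebraic condition that the $z_k^\ast$ be soluble and lie in $\mathbb H$ for real $x$; the corollary assumes no convergence and no analyticity of $h$, so nothing forces the sign pattern $p_k>0$, $q_{k\ell}\le 0$. Concretely, take $r=3$, $\Pi=\overline{\mathbb H}^3$, $s_0=(1,1,\varepsilon)$ with $\varepsilon>0$ small, and defining forms with Jacobian
\begin{equation}
J=\begin{bmatrix}1&-1&-1\\2&-3&-3\\0&1&0\end{bmatrix},
\end{equation}
so that $p_1=1$, $p_2=-1$, $p_3=1$, $q_{12}=q_{13}=q_{23}=-1$, $r_{12}=2$, $r_{13}=2$, $r_{23}=1$. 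The collection is unstable (e.g.\ $p_2<0$), hence $\Pi$-compatible, and indeed every $q_{j\ell}\le 0$. Formula \eqref{eqn:zkFormula} gives $z_1^\ast=i+x_2+x_3$, $z_2^\ast=i-x_3$, $z_3^\ast=i(1-\varepsilon)$, so the flag arises at $s_0$ (all three lie in $\mathbb H$ for real $x$), yet $z_2^\ast=-i\notin\mathbb H$ at $x_3=2i\in\overline{\mathbb H}$: the culprit is $p_2<0$, which reverses your monotonicity. So the \enquote{only if} of part (3) fails for compatible flags not cut out by a $\Pi$-stable collection; the statement should be read, and can only be proved, under that additional hypothesis, in which case your argument already covers it (and this is the only case used elsewhere in the paper, since unstable flags cancel in the main theorem).
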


%


\section{Proof of the residue formula} 

\begin{theorem}
    Assume the iterated residue expansion of 
    $\int_{V}\omega$ along $\Pi$ 
    converges. 
If every polar flag of $\omega$ is $\Pi$-compatible, then  
\begin{equation}\label{eqn:ResidueFormula} 
    \frac{1}{(2\pi i)^{r}}
    \int_{V} \frac{h(z)\,dz}{g_1(z)\cdots g_R(z)}
    = 
    \sum_{\gamma \in Z_\Pi}
        \res[\omega,\gamma] 
\end{equation}   
where $Z_\Pi \subset Z$ 
    is the subset of flags cut out by 
    a $\Pi$-stable collection of polar hyperplanes. 
\end{theorem}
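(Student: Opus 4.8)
The plan is to combine the convergence hypothesis, which already expresses the integral as a sum of residues over the flags that actually occur in the expansion, with a deformation in the hyperplane parameters $s$, reducing the whole statement to a purely combinatorial sign count that compatibility controls. The starting point is the identity recorded just before the statement: by the convergence hypothesis and Cauchy's residue theorem, $\tfrac1{(2\pi i)^r}\int_V\omega=\sum_{\gamma\in Z_\ast}\res[\omega,\gamma]$, where $Z_\ast\subseteq Z$ is the set of polar flags arising in the iterated residue expansion along $\Pi$, and by Lemma~\ref{lemma} one has $Z_\Pi\subseteq Z_\ast$. So the theorem reduces to showing $\sum_{\gamma\in Z_\ast\setminus Z_\Pi}\res[\omega,\gamma]=0$, i.e. that the residues of the $\Pi$-compatible but non-stable flags which happen to arise cancel; individual such residues do not vanish, so the cancellation must be organized carefully.

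Next I would set up the deformation. Write $\omega=\omega_{s_0}$ and let $s$ range over $\CC_+^R$. Three observations: (i) $\int_V\omega_s$ is holomorphic on $\CC_+^R$, since $\operatorname{Re}s_k>0$ forces each $g_k$ to be nonvanishing on $V$ and the decay at infinity is $s$-independent; (ii) the iterated residue expansion along $\Pi$ converges for every $s$ off a locally finite union $\mathcal W$ of real hypersurfaces $\{\operatorname{Im}z_k^\ast(\gamma;s)=0\}$ — convergence being an asymptotic (degree) condition it persists throughout $\CC_+^R\setminus\mathcal W$ once it holds at $s_0$, while on $\mathcal W$ a pole lands on a contour; (iii) on each chamber of $\CC_+^R\setminus\mathcal W$ the set $Z_\ast(s)$ is constant, and by \eqref{eqn:zkFormula} a flag $\gamma$ lies in $Z_\ast(s)$ exactly when $\operatorname{Im}z_k^\ast(\gamma;s)>0$ for all $k$; moreover $\operatorname{Im}z_k^\ast(\gamma;s)$ is $\RR$-linear in $\operatorname{Re}s$ and depends on $\gamma$ only through the minors of the Jacobian $J_\gamma$ (this is the content of the computation in the proof of Lemma~\ref{lemma} and of the Corollary following it).

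Then I would reduce to a per-subset identity. For an unordered $r$-subset $S$ of polar hyperplanes in general position the $r!$ orderings give flags $\gamma_{S,\sigma}$ ($\sigma\in S_r$) all with the same terminal point $z_S$; since $\omega$ is a normal-crossing logarithmic form near $z_S$, anticommutativity of the Poincar\'e residue gives $\res[\omega_s,\gamma_{S,\sigma}]=\operatorname{sgn}(\sigma)\,c_S(s)$ for a single factor $c_S(s)$, rational in $s$ and proportional to $h(z_S(s))\big/\prod_{j\notin S}g_j(z_S(s))$. Hence $\sum_{\gamma\in Z_\ast(s),\,\gamma\subseteq S}\res[\omega_s,\gamma]=c_S(s)N_S(s)$ with $N_S(s)=\sum_{\sigma:\gamma_{S,\sigma}\in Z_\ast(s)}\operatorname{sgn}(\sigma)$, and similarly $\sum_{\gamma\in Z_\Pi,\,\gamma\subseteq S}\res[\omega_s,\gamma]=c_S(s)N_S^{\mathrm{st}}$ with $N_S^{\mathrm{st}}=\sum_{\sigma:\,J_{\gamma_{S,\sigma}}\text{ stable}}\operatorname{sgn}(\sigma)$ independent of $s$. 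Summing over $S$, the theorem follows once one proves: for every such $S$, all of whose orderings are $\Pi$-compatible, $N_S(s)=N_S^{\mathrm{st}}$ for all $s\notin\mathcal W$. To prove this I would show $N_S(\cdot)$, which is locally constant off $\mathcal W$, is in fact unchanged across every wall: crossing $\{\operatorname{Im}z_k^\ast(\gamma_{S,\sigma};s)=0\}$ toggles $\gamma_{S,\sigma}$ in and out of $Z_\ast(s)$ only when all other $\operatorname{Im}z_j^\ast$ are positive there, and I would pair $\gamma_{S,\sigma}$ with $\gamma_{S,\sigma\cdot(k-1,k)}$, of opposite sign. Using the explicit transformation of the minors $p_j,q_{jk},r_{jk}$ of $J_{\gamma_\sigma}$ under swapping rows $k-1,k$ together with the defining inequalities of stability and compatibility, one checks that either the two paired flags toggle simultaneously (so their signed contributions cancel) or one of them is $\Pi$-stable, in which case Lemma~\ref{lemma} keeps it in $Z_\ast(s)$ on both sides of the wall while the compatibility inequalities $q_{jk}\le 0$ keep the other one out; either way $N_S$ is unchanged. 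Hence $N_S$ is globally constant, and I would finish by evaluating it in one chamber — e.g. $s$ real with $\operatorname{Re}s_i$ of rapidly separated magnitudes along a ray generic for $S$ — where a direct sign analysis as in Lemma~\ref{lemma} shows every arising flag is stable, so $N_S=N_S^{\mathrm{st}}$.

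The main obstacle is exactly the wall-crossing step — the "the numerous unstable flags cancel identically" phenomenon flagged in the introduction. Making it rigorous demands careful bookkeeping of how $p_j,q_{jk},r_{jk}$ change under an adjacent transposition of the defining forms, and it is precisely there that the stability inequalities on the $r$-minors and the compatibility inequalities $q_{jk}\le 0$ must be invoked to guarantee the pairing respects membership in $Z_\ast(s)$. A secondary point needing care is the base case: one must exhibit, for each $S$, a single chamber of $\CC_+^R\setminus\mathcal W$ in which none of the non-stable orderings of $S$ arises, which again rests on the sign computation underlying Lemma~\ref{lemma}.
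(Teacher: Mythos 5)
Your proposal opens the same way the paper does --- the identity $\tfrac{1}{(2\pi i)^r}\int_V\omega_s=\sum_H\itres_z[\omega_s,\gamma_H]\cdot 1_{\gamma_H}(s)$ for $s$ in a dense open subset of $\CC_+^R$, plus Lemma~\ref{lemma} to isolate the flags that contribute for every $s$ --- but from there the two arguments diverge completely. The paper never groups flags by unordered subset and never does a wall-crossing count. It observes instead that $R(s)=\sum_{H\not>_\Pi 0}\res[\omega_s,\gamma_H]\,1_{\gamma_H}(s)$ equals $\tfrac{1}{(2\pi i)^r}\int_V\omega_s-\sum_{H>_\Pi 0}\res[\omega_s,\gamma_H]$ and hence is meromorphic in $s$, while by construction it is a piecewise expression supported on a union of \emph{proper} open cones $S_{\gamma_H}$; since a meromorphic function cannot jump across the real-codimension-one boundary $\partial S_{\gamma_H}$, it concludes $R\equiv 0$ in one stroke. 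Your route localizes the cancellation: antisymmetry of the iterated residue under reordering reduces the theorem to the per-subset identity $N_S(s)=N_S^{\mathrm{st}}$, which is strictly stronger than $R\equiv 0$ and, if carried out, would be the more informative proof. Your adjacent-transposition pairing does in fact work, for a reason you do not supply: on the wall $\{\operatorname{Im}z_k^\ast=0\}$ the unique point of $H_{\sigma(1)}\cap\cdots\cap H_{\sigma(k)}$ with real coordinates $z_{k+1},\ldots,z_r$ also has real $z_k$-coordinate, hence coincides with the level-$(k-1)$ base point of \emph{every} soluble $(k-1)$-subset of $\{H_{\sigma(1)},\ldots,H_{\sigma(k)}\}$; so the step-$(k-1)$ sign conditions of $\gamma_{S,\sigma}$ and $\gamma_{S,\sigma\cdot(k-1,k)}$ agree on the wall and the partners toggle together.

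That said, the two steps you defer carry essentially all of the content, and one of them fails as sketched. For the base case you need a chamber of $\CC_+^R$ in which no unstable ordering of $S$ arises; properness of each cone $S_{\gamma_\sigma}$ does not give this, because finitely many proper open cones can cover $\CC_+^R$ --- the figure in the paper's proof depicts exactly such a covering --- so the ``rapidly separated magnitudes'' computation must actually be carried out (or replaced by a direct combinatorial evaluation of $N_S$). Second, deforming in $s$ requires the expansion identity at parameters other than $s_0$, i.e.\ convergence of the iterated residue expansion there; your assertion that convergence ``persists'' off the walls is unjustified, and this is where compatibility is really earning its keep (it keeps the residual contours inside $\Pi$, cf.\ Figure~\ref{fig:ExampleLeavingPolyhedron}), whereas your placement of the $q_{j\ell}\le 0$ inequalities inside the wall-crossing step is a misdiagnosis --- the toggling cancellation needs no condition on the $q$'s. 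In short: a genuinely different and viable strategy, but the load-bearing steps are sketched rather than proved, and the paper's analytic-rigidity argument is precisely the device that lets it avoid proving them.
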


\begin{proof} 
Fix arbitrary $x_2,\ldots,x_r \in \RR$. 
For each linearly independent ordered collection 
    $H=(H_1,\ldots,H_r)$ of polar hyperplanes 
    with associated flag $\gamma=\gamma_H$ 
    let $z_k^\ast=z_k^\ast(x,s,\gamma)$ 
    be given by \eqref{eqn:zkFormula}. 
There is a dense Zariski-open subset $U \subset \CC_+^R$ 
    such that for all $s \in U$ 
    each polar flag of $\omega = \omega_s(z)$ is cut out by 
    a \emph{unique} ordered collection of 
    polar hyperplanes.\footnote{Equivalently, each terminal point of a polar flag is simple in the sense of 
    \cite[\S II.5.2]{zbMATH00107779}.} 
A flag $\gamma$ arises in the iterated residue expansion 
    if and only if $z_k^\ast(x,s,\gamma) \in \mathbb H$ 
    for all $k$. 
Thus for all $s \in U$ we have 
\begin{equation} 
    \frac{1}{(2\pi i)^{r}}
    \int_{V} \omega 
    = 
    \sum_H
        \itres_z[\omega,\gamma_H] \cdot 1_{\gamma_H}(s)
\end{equation} 
where the sum is over 
    all linearly independent $r$-sets $H$ of polar hyperplanes 
    and $1_{\gamma}$ is the characteristic function of 
    the open subset of parameters 
\begin{equation} 
    S_{\gamma}=\left\{s\in \CC^R_+ : 
    \mathrm{Im}(z_k^\ast(x,s,\gamma)) > 0 
    \text{ for all $1\leq k\leq r$}\right\}.
\end{equation} 
(Were $s \not \in U$ one would instead sum over flags.)
We restrict the sum to soluble flags 
    as insoluble flags do not contribute to 
    the iterated residue expansion of $\int_V \omega$. 
For such flags the truncated residues are classical residues: 
\begin{equation} 
    \frac{1}{(2\pi i)^{r}}
    \int_{V} \omega 
    =
    \sum_{z \gamma_{H,0}\in B^T \,\backslash \,B^T B}
        \itres_z[\omega,\gamma_H] \cdot 1_{\gamma_H}(s)
    = 
    \sum_{z \gamma_{H,0}\in B^T \,\backslash \,B^T B}
        \res[\omega,\gamma_H] \cdot 1_{\gamma_H}(s).
\end{equation} 
Write $H>_\Pi 0$ to mean $H$ is $\Pi$-stable. 
If $H >_\Pi 0$ then $S_{\gamma_H} = \CC^R_+$ 
    by Lemma~\ref{lemma} so for all $s \in U$ we have 
\begin{equation} 
    \frac{1}{(2\pi i)^{r}}
    \int_{V} \omega 
    =
    \sum_{H>_\Pi 0}
        \res[\omega,\gamma_H] 
    +
    \sum_{H \ngtr_\Pi 0}
        \res[\omega,\gamma_H] 
        \cdot 1_{\gamma_H}(s)
\end{equation}   
Since $\int_V \omega$ is absolutely convergent by assumption, 
    it varies analytically with the parameters $s\in \CC^R_+$. 
Meanwhile if $H>_\Pi 0$ then 
    the $z_1^\ast(x,s,\gamma_H),\ldots,z_r^\ast(x,s,\gamma_H)$ 
    are soluble for all $s \in U$ and so 
    $\res[\omega,\gamma_H]$ is 
    a meromorphic function of $s\in \CC^R_+$. 
Therefore 
\begin{equation}\label{eqn:Rvanishing} 
    R\coloneqq 
    \sum_{H \ngtr_\Pi 0}
        \res[\omega,\gamma_H] 
        \cdot 1_{\gamma_H}(s)
    =
    \frac{1}{(2\pi i)^{r}}
    \int_{V} \omega 
    - 
    \sum_{H>_\Pi 0}
        \res[\omega,\gamma_H] 
\end{equation} 
is a meromorphic function of $s \in U$. 
We claim the only way for 
    the piecewise function $R$ to be meromorphic 
    is if it is identically zero. 
Suppose it is not identically zero, 
in which case 
$\{S_{\gamma_H} : S_{\gamma_H} \neq \varnothing, H\not >_\Pi 0 \}$ 
is an open covering of $\CC^R_+$. 
Choose some $H$ for which 
    $S_{\gamma} \neq \varnothing$ and $H\not >_\Pi 0$ 
    where $\gamma = \gamma_H$. 
Lemma~\ref{lemma} implies $S_\gamma$ 
is a proper subset of $\CC_+^R$ 
so the complement $\CC^R_+ - S_\gamma$ 
has nonempty interior, 
and thus the topological boundary $\partial S_\gamma$ is nonempty. 

\begin{figure}[h]
\begin{tikzpicture}[scale=.7] 

\def\x{5}
\def\y{4}
\def\sa{2}
\def\sb{1}
\def\sd{1}

\draw (\x+2/3,\y-1) node {$S_\gamma$};

\draw (\x+4/5,\y-3) node {$\partial S_\gamma$};


\def\ld{1.5}
\def\op{.6}

    \draw[dashed,pattern={Lines[angle=-10,distance=\ld]},opacity=\op] 
    (0,0) -- (0,\y) 
        -- (\y,\y) -- (0,0);

    \draw[white, draw = white, thick] 
    (0,\y) -- (\y,\y); 

    \draw[dashed, pattern={Lines[angle=-55,distance=\ld]},opacity=\op] 
        (0,0) -- (3,\y) 
        -- (\x,\y) -- (\x,1) -- (0,0);

    \draw[white, draw = white, thick] 
    (3,\y) -- (\x,\y) -- (\x,1); 

    \draw[dashed, pattern={Lines[angle=70,distance=\ld]},opacity=\op] 
    (0,0) -- (\x,2) 
        -- (\x,-\y) -- (2*\y/3,-\y) -- (0,0);

    \draw[white, draw = white, thick] 
    (\x,2) -- (\x,-\y); 

    \draw[dashed, pattern={Lines[angle=35,distance=\ld]},opacity=\op] 
    (0,0) -- (0,-\y) 
        -- (\x,-4) -- (0,0);

    \draw[white, draw = white, thick] 
    (\x,-\y) -- (0,-\y); 

\draw[-] (0,-\y) -- (0,\y) node[anchor=east] {};

\draw (2*\x/3,2/3) node[circ,fill=white,inner sep=1pt]{}; 
\draw (1*\x/3,1/3) node[circ,fill=white,inner sep=1pt]{}; 

    \end{tikzpicture} 
    \caption{A real picture of $\CC_+^R$ covered by four unstable cones. Two points of $\partial S_\gamma - U$ 
    are marked on the boundary $\partial S_\gamma$ 
    of the depicted cone $S_\gamma$.}
\end{figure}

\noindent
The boundary $\partial S_\gamma$ 
    has real codimension one in $\CC^R_+$ 
    since $S_\gamma$ is a real open cone in $\CC^R_+$, 
so it cannot be covered by the complement of $U$ 
or the divisor where $R$ is singular since 
    these have real codimension two. 
Thus $R$ is regular at some boundary point 
    $z \in \partial S_\gamma \cap U$, 
    but this must also be a point of discontinuity for $R$. 
This contradiction implies $R = 0$. 
We conclude 
\begin{equation} 
    \frac{1}{(2\pi i)^{r}}
    \int_{V} \omega 
    = 
    \sum_{H>_\Pi 0}
        \res[\omega,\gamma_H] 
\end{equation} 
for all $s \in U$. 
Both sides admit meromorphic continuation to $\CC_+^R$. 
A meromorphic expression for the right-hand side 
valid for any $s\in\CC_+^R$ is 
\begin{equation} 
    \sum_{\gamma \in Z_\Pi}
        \res[\omega,\gamma]. 
\end{equation} 
This proves the formula. 
\end{proof} 

\subsection{A converse} 

The hypothesis of $\Pi$-compatibility cannot be dropped. 
Let $\Omega$ denote the set of all $r$-forms of the form 
    \eqref{eqn:presentationOfOmega} 
    for some $s\in\CC_+^R$. 
Our theorem is the `if' direction 
    of the next proposition 
    and we omit the easy proof of 
    the `only if' direction. 

\begin{proposition}\label{prop:converse}
    The residue formula \eqref{eqn:ResidueFormula} holds for all 
    $\omega \in \Omega$ for which $\int_V \omega$ has 
    a convergent iterated residue expansion along $\Pi$ 
    if and only if every flag formed from 
    the hyperplanes $H_1=\{g_1 = 0\},\ldots,H_R=\{g_R=0\}$ 
    is $\Pi$-compatible. 
\end{proposition}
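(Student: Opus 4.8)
The \emph{if} direction is precisely the Theorem just proved, so only the \emph{only if} direction requires argument, and I would establish it in contrapositive form: assuming that some flag cut out by hyperplanes among $H_1,\dots,H_R$ fails to be $\Pi$-compatible, I would exhibit a single $\omega\in\Omega$ with a convergent iterated residue expansion along $\Pi$ for which \eqref{eqn:ResidueFormula} breaks down.

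The first step is to translate the hypothesis. By definition a flag that is not $\Pi$-compatible is $\Pi$-stable, yet has $q_{j\ell}>0$ for some $1\le j<\ell$ among the minors of its Jacobian (the sign being well defined by the Remark after the definition of stability). Reindexing, and completing the flag if needed---which affects none of the minors $q_{j'\ell'}$ with $j'<\ell'$ already present---I may take this flag to be $\gamma_0$, cut out by $H_1,\dots,H_r$. By Lemma~\ref{lemma}, $\gamma_0$ is $\Pi$-stable, hence it lies in $Z_\Pi$ and contributes to the iterated residue expansion of $\omega_s$ for every $s\in\CC^R_+$; so $\res[\omega,\gamma_0]$ is a genuine summand on both sides of \eqref{eqn:ResidueFormula}. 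The point of $q_{j\ell}(\gamma_0)>0$ is read off from the explicit formula \eqref{eqn:zkFormula}: when one runs the iterated residue along $\gamma_0$ in the coordinates $z$ and arrives at the step of integrating out $z_\ell$ over $\overline{\mathbb H}$, the value $z_j^*$ already produced carries a term $-p_j^{-1}q_{j\ell}z_\ell$, and since $p_j>0$ and $q_{j\ell}>0$ this forces $\mathrm{Im}\,z_j^*\to-\infty$ as $z_\ell$ runs up the upper half-plane. In other words, while the residue along $H_j$ was legitimately taken (with the $j$th coordinate still in $\overline{\mathbb H}$), by the time $z_\ell$ is being integrated the running point has left the polyhedron $\Pi$ through its $j$th facet. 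Equivalently, this is the failure---irreparable without compatibility---of the passage from `$x\in\RR^{r-1}$' to `$x\in\overline{\mathbb H}^{r-1}$' in parts (1) and (3) of the Corollary.

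I would exploit this as follows. Take $\omega=h(z)\,dz/(g_1(z)\cdots g_R(z))$ with the parameters $s$ fixed (say real) and the numerator $h$ chosen so that its only singularity is a simple pole along a single affine hyperplane $K$ which is disjoint from $\Pi$---so that $h$ is still holomorphic on a neighbourhood of $\Pi$, as required---and positioned so that the portion of the coordinate curve $z_\ell\mapsto(\dots,z_j^*(z_\ell),\dots)$ lying below the real $z_j$-axis passes through $K$. Running the iterated residue expansion along $\Pi$, the residues along $H_1,\dots,H_{\ell-1}$ produce a form in $z_\ell$ which now has a pole coming from $K$, and this pole lies in the open upper half-plane $\mathbb H$ precisely because $q_{j\ell}>0$ has pushed $z_j^*$ across the real axis; hence the expansion picks up an extra contribution attached to $K$ that is absent from the $\Pi$-stable sum $\sum_{\gamma\in Z_\Pi}\res[\omega,\gamma]$ of polar flags of $g_1,\dots,g_R$. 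It remains to see that this extra contribution does not cancel. For this I would choose $h$ and $s$ generically and account for everything the expansion collects: beyond the $\Pi$-stable residues, the polar flags of $g_1,\dots,g_R$ that are picked up are exactly the ones controlled by the cancellation mechanism in the proof of the Theorem, while the $K$-contribution has no such partner. Consequently the left-hand side of \eqref{eqn:ResidueFormula} differs from the right-hand side, and the formula fails for this $\omega$.

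The main obstacle I anticipate is exactly this last bookkeeping: pinning down a clean $h$ (and $s$) for which the $K$-term provably survives, and verifying that the resulting $\omega$ has an absolutely convergent integral and a convergent iterated residue expansion along $\Pi$. The most transparent route is to carry this out in an explicit low-dimensional model with a small number of hyperplanes---of the same flavour as the configuration in Example~\ref{sec:Example}---and then to observe that an incompatible flag always contains, in suitable coordinates on the faces it meets, such a model.
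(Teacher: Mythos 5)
The paper gives you nothing to compare against here: the ``if'' direction is the main theorem, and the author explicitly omits the ``only if'' direction as easy, so your attempt must stand on its own. Your outline follows the mechanism the paper itself gestures at in Example~\ref{sec:Example} and Figure~\ref{fig:ExampleLeavingPolyhedron}: you correctly unwind the definition (incompatible $=$ stable with some $q_{j\ell}>0$), correctly read off from \eqref{eqn:zkFormula} that $\mathrm{Im}\,z_j^\ast\to-\infty$ as $z_\ell$ moves up $\mathbb H$, and correctly invoke Lemma~\ref{lemma} to guarantee the offending flag contributes for every $s\in\CC^R_+$. This is the right skeleton for the contrapositive.

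There are, however, two genuine gaps. First, placing a pole of $h$ along a hyperplane $K$ changes the polar locus of $\omega$: the sets $Z$ and $Z_\Pi$ appearing on the right-hand side of \eqref{eqn:ResidueFormula} are defined from the polar hyperplanes of $\omega$, and $\Omega$ is parametrized by $s\in\CC^R_+$ for the \emph{fixed} denominators $g_1,\ldots,g_R$; you must either check that no $\Pi$-stable collection involves $K$ (plausible for $K=\{z_j=w\}$ with $\mathrm{Im}\,w<0$, but it needs an argument) or justify that such $\omega$ still counts as a member of $\Omega$ with numerator $h$. A singular or badly-behaved numerator really is unavoidable here: for the exponential numerator of Example~\ref{sec:Example} with $n_2\ge n_1$ the iterated residue expansion along the incompatible polyhedron $\Pi_A$ still produces exactly $\res[\omega,\gamma_{31}]$, i.e.\ the sum over $Z_{\Pi_A}$, so that configuration by itself does not violate \eqref{eqn:ResidueFormula} (only the naive formula with terminal points in $\Pi_A$), and when $n_1>n_2$ the expansion fails to converge and the form is excluded by hypothesis. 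Second, the non-cancellation of the extra $K$-contribution is the entire content of the claim and is left open. A workable route is to observe that the defect $\mathrm{LHS}-\mathrm{RHS}$ of \eqref{eqn:ResidueFormula} is linear in $h$, so it suffices to show the defect is nonzero for $h_K=1/(z_j-w)$ alone; running the expansion for $h_K\cdot dz/(g_1\cdots g_R)$ yields the usual flag residues (whose unstable part vanishes by the argument in the proof of the theorem) plus the single explicit $K$-residue, which one can compute to be nonzero for generic $w$. Until that computation, and the convergence of the expansion for this $\omega$, are actually carried out, the proof is a plan rather than an argument.
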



\subsection{Sufficient conditions for convergence}\label{sec:Convergence} 

Since our convergence condition is formulated 
    in terms of one-dimensional integrals, 
    the classical Jordan lemma may be directly applied to give 
    sufficient conditions for convergence. 
These are easier to check 
    than \cite[Definition~2]{zbMATH00739368} 
    which involves auxiliary forms. 

\begin{proposition}
    Suppose that every polar flag of 
    $\omega = h(z)\,dz/(g_1\cdots g_R)$ is $\Pi$-compatible. 
\begin{enumerate}
\item If $R > r$ and $h(z)$ is 
        a bounded holomorphic function on $\Pi$, 
    then the iterated residue expansion of $\int_V \omega$ 
        along $\Pi$ converges. 
\item If $\psi\in V^\vee$ is a real linear form 
    satisfying $\psi(\theta)>0$ for each $\theta \in \Theta(1)$, 
    $h(z)$ is a holomorphic function on $\Pi$ 
    satisfying $h(z) = o(|z|^R)$, 
    and $\int_V \omega$ is absolutely convergent, 
    then the iterated residue expansion of $\int_V \omega$ 
        along $\Pi$ converges. 
    \end{enumerate}
\end{proposition}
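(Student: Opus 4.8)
The plan is to reduce both assertions to one-variable contour estimates governed by Jordan's lemma, using the $\Pi$-compatibility hypothesis to keep the relevant substitutions inside $\Pi$ where the growth assumptions on $h$ have content. By definition, the iterated residue expansion of $\int_V\omega$ along $\Pi$ converges once, for every polar flag $\gamma\colon H_1\supset\cdots\supset H_1\cap\cdots\cap H_{k-1}$ that arises in the expansion (for $k=1$, take $\gamma$ empty and $G_\gamma$ the integrand of $\omega$) and every $x_{k+1},\dots,x_r\in\RR$, the integral of $z_k\mapsto G_\gamma(z_k,x_{k+1},\dots,x_r)$ over the semicircular arc of radius $R$ in $\overline{\mathbb H}$ tends to $0$ as $R\to\infty$, where $\res[\omega,\gamma]=G_\gamma\,dz_k\wedge\cdots\wedge dz_r$. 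Such a $\gamma$ is soluble, so by Proposition~\ref{prop:openBruhat} its linearization lies in the open Bruhat cell and its truncated residue is a genuine iterated residue; taking iterated one-variable residues of \eqref{eqn:omegaForm} at the points $z_1=z_1^\ast,\dots,z_{k-1}=z_{k-1}^\ast$ of \eqref{eqn:zkFormula} exhibits
\[
G_\gamma \;=\; c_\gamma\,\frac{h\circ A_\gamma}{\displaystyle\prod_{H\notin\gamma}\bigl(g_H\circ A_\gamma\bigr)},\qquad c_\gamma\in\CC^\times,
\]
where the product is over the $R-(k-1)$ polar hyperplanes not used by $\gamma$ and $A_\gamma\colon\CC^{r-k+1}\to\gamma(k-1)\subset V_\CC$ is the affine parametrization $(z_k,\dots,z_r)\mapsto z_1^\ast v_1+\cdots+z_{k-1}^\ast v_{k-1}+z_k v_k+\cdots+z_r v_r$, the $v_j$ being the basis dual to the coordinates that cut out $\Pi$. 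The problem is to bound $G_\gamma$ along these arcs.

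The compatibility hypothesis enters here. Extending $\gamma$ to a complete polar flag (still $\Pi$-compatible by hypothesis) and invoking part~(3) of the corollary following Lemma~\ref{lemma}, $\Pi$-compatibility forces $z_j^\ast(x,s)\in\overline{\mathbb H}$ for $j<k$ and for all $x\in\overline{\mathbb H}^{\,r-1}$, not only for real $x$. Taking $x=(z_k,x_{k+1},\dots,x_r)$ with $\mathrm{Im}\,z_k\ge0$ and $x_{k+1},\dots,x_r\in\RR$ then shows that $A_\gamma$ maps the closed region swept by the arcs $C_R$ into $\Pi$. Two consequences: the numerator $h\circ A_\gamma$ along these arcs inherits the growth of $h$ on $\Pi$ — it is bounded in case~(1), and equals $o(|A_\gamma|^R)=o(|z_k|^R)$ in case~(2) since $|A_\gamma|$ is linear in $|z_k|$; and the real direction $u\coloneqq\partial A_\gamma/\partial z_k=v_k+\sum_{j<k}\bigl(-p_j^{-1}q_{jk}\bigr)v_j$ lies in $\Theta$, and is nonzero because its $v_k$-coefficient is $1$, so $u\in\Theta\setminus\{0\}$.

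For part~(1), after $k-1\le r-1$ residues the denominator of $G_\gamma$ is a product of $R-(k-1)\ge R-r+1\ge2$ affine factors, and using $R>r$ together with the transversality of the arising flags one checks that at least two of these factors are nonconstant in $z_k$; since $h\circ A_\gamma$ is bounded, $|G_\gamma(z_k,x_{k+1},\dots,x_r)|=O(|z_k|^{-2})$ uniformly on the arcs $C_R$, so the arc integral is $O(R^{-1})\to0$. The same bound on the real axis makes the one-dimensional integrals absolutely convergent, and then a descending induction along the flag, applying Fubini at each stage, gives absolute convergence of $\int_V\omega$ itself. This verifies all the arc conditions, hence part~(1).

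For part~(2) the numerator is only $o(|z_k|^R)$ and the elementary Jordan estimate no longer suffices, so I would multiply $\omega$ by the convergence factor $e^{i\varepsilon\psi(z)}$ with $\varepsilon>0$. Because $\psi>0$ on $\Theta(1)$, hence on $\Theta\setminus\{0\}$, and $u\in\Theta\setminus\{0\}$, one has $\psi(u)>0$, so $\bigl|e^{i\varepsilon\psi(A_\gamma(z_k))}\bigr|$ decays like $e^{-\varepsilon\psi(u)\,\mathrm{Im}\,z_k}$ along $C_R$; the refined form of Jordan's lemma then makes the arc integral of $e^{i\varepsilon\psi}G_\gamma$ tend to $0$ as $R\to\infty$ for each fixed $\varepsilon>0$, and letting $\varepsilon\to0^+$ while using the assumed absolute convergence of $\int_V\omega$ to dominate and interchange the limits recovers the arc condition for $G_\gamma$ itself. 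The step I expect to be the main obstacle is precisely the one-variable bookkeeping flagged above — verifying that enough denominator factors genuinely grow in the active variable $z_k$ at each stage so that the (refined) Jordan estimate actually closes — which is where the hypotheses $R>r$, boundedness of $h$, $h=o(|z|^R)$, and the existence of $\psi$ all do their real work; granted that, the rest is a routine application of Cauchy's and Jordan's theorems.
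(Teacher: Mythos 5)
The paper states this proposition without proof (only the preceding remark that the classical Jordan lemma applies directly), so there is no written argument to compare against; your overall architecture --- invoke part (3) of the corollary to Lemma~\ref{lemma} so that compatibility keeps the residue points in $\overline{\mathbb H}$ as $z_k$ sweeps the arcs, hence keeps $A_\gamma(C_R)$ inside $\Pi$ where the growth hypotheses on $h$ apply, then count denominator factors for (1) and insert $e^{i\varepsilon\psi}$ for (2) --- is surely the intended one. However, there is a genuine gap in your proof of (1): the claim that ``$R>r$ together with the transversality of the arising flags'' forces at least two factors of the denominator of $G_\gamma$ to be nonconstant in $z_k$ is false. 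A polar hyperplane $H_j\notin\gamma$ that is parallel to $\gamma(k-1)$ contributes a \emph{constant} factor $g_j\circ A_\gamma$, and transversality of $\gamma$ constrains only the hyperplanes inside $\gamma$. Concretely, for $\omega = dx\wedge dy/\bigl((x-i)(y-i)(x-2i)\bigr)$ and $\Pi=\overline{\mathbb H}{}^2$ one has $R=3>2=r$, $h=1$ bounded, and every polar flag $\Pi$-compatible, yet the residual form on $H_1=\{x=i\}$ is $i\,dy/(y-i)$, whose integral over $\RR$ does not converge and whose arc integrals do not tend to zero. What rescues (1) is the absolute convergence of $\int_V\omega$, which is a standing assumption in the paper's definition of convergence of the expansion (and which fails in the example above): absolute convergence forces, for every nonzero real direction $u$, at least two of the $f_j$ to satisfy $f_j(u)\neq 0$, and since $u=\partial A_\gamma/\partial z_k$ lies in $\gamma(k-1)_0=\bigcap_{j<k}\ker f_j$, at least two of the \emph{remaining} factors grow linearly in $z_k$. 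You should argue the factor count this way; it also means your closing attempt to derive absolute convergence of $\int_V\omega$ from the $O(|z_k|^{-2})$ bound is circular and should be dropped.

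Your treatment of (2) has the right skeleton --- $u\in\Theta\setminus\{0\}$ does follow for compatible flags arising in the expansion, because the ``for all $x\in\overline{\mathbb H}{}^{r-1}$'' conclusion of part (3) of the corollary forces $-p_j^{-1}q_{jk}\geq 0$, whence $\psi(u)>0$ --- but two steps are asserted rather than proved. First, at stage $k$ only $R-k+1$ denominator factors remain (and possibly fewer are nonconstant in $z_k$), so the hypothesis $h=o(|z|^R)$ does not by itself make $G_\gamma$ tend to $0$ on the arcs, which is what the refined Jordan lemma requires even in the presence of the decaying exponential; some additional bookkeeping (again leaning on absolute convergence) is needed. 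Second, the interchange of $\lim_{\varepsilon\to 0^+}$ with $\lim_{R\to\infty}$ requires $G_\gamma$ to be integrable on $\RR$ in the active variable, which you have not established. These are exactly the points you flagged as the main obstacle, and they do need to be closed before the argument is complete.
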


One might naively expect (1) to hold 
    even without assuming compatibility, 
    however Example~\ref{sec:Example} 
    (when $n_1 > n_2$) 
    shows that compatibility cannot be dropped. 


\subsection{Grothendieck residues}\label{sec:ClassicalResidue} 

Although it is not necessary to apply our residue formula, 
    here we explain how to express it 
    using Grothendieck residues. 
We recall the definition. 
Consider a meromorphic $r$-form $\omega$ 
    on an open set $U \subset V_\CC$. 
By a \defn{system of divisors of $\omega$ at a point $m$} 
    we mean a collection $D = (D_1,\ldots,D_r)$ 
    of divisors in $U$ such that $\omega$ 
    is regular away from the union 
    $D_1 \cup \cdots \cup D_r$ 
    and $D_1 \cap \cdots \cap D_r$ 
    is a discrete set containing $m$. 
Suppose that $D_k = \{g_k = 0\}$ and 
define the topological $r$-cycle 
\begin{equation}\label{eqn:rCycle} 
    \Gamma_{g} = \{z \in V_\CC:|g_j(z)| = \varepsilon_j 
    \text{ for all $j=1,\ldots,r$}\}
\end{equation} 
where the positive constants $\varepsilon_j$ 
are sufficiently small that 
$\Gamma_g \subset U$; 
this $r$-cycle is equipped with 
its unique orientation for which 
$d(\mathrm{arg}\,g_1)\wedge \cdots \wedge d(\mathrm{arg}\,g_r) \geq 0$. 

\begin{definition}\label{defn:Residue}
    The \defn{residue of the form $\omega$ 
    with respect to the system of divisors 
    $D$ at $m$} is 
\begin{equation} 
    \mathrm{res}_D[\omega,m]=
    (2\pi i)^{-r}
    \int_{\Gamma_g}
    \omega.
\end{equation} 
\end{definition}

\begin{remark}
$\res_D[\omega,m]$ is independent of the $\varepsilon_j$ 
    by Stokes's theorem, 
    cf.~\cite[\S II.5.1]{zbMATH00107779}. 
\end{remark}

\begin{remark}
In the algebraic geometry literature 
    this is also denoted 
    $$\mathrm{res}\left[\genfrac{}{}{0pt}{}{h\,dz}{g_1,\ldots,g_r} \right]$$ 
    where $\omega=h\,dz/(g_1\cdots g_r)$. 
    It admits a purely algebraic definition 
    \cite[\S III.9]{zbMATH03336606}. 
\end{remark}

One is tempted to regard $\mathrm{res}_D[\omega,m]$ 
    as determined by $\omega$ and $m$, 
    but it crucially depends on 
    the system of divisors. 
If the divisors are irreducible, 
    then this ambiguity amounts to a sign. 
However, if any of the divisors is \emph{reducible}, 
    then there are multiple ways to group 
    the irreducible singular divisors into $r$ 
    divisors $D_1,\ldots,D_r$ 
    and {different groupings generally result in 
    independent residues}. 
See \S\ref{sec:Example2} for an integral 
    with three different groupings $g$ 
    leading to three non-homologous cycles $\Gamma_g$. 

Let $z = (z_1,\ldots,z_r)$ be a basis of $V^\vee_\CC$. 
For each polar flag $\gamma :
H_{1} \supset 
H_{1} \cap H_{2} \supset \cdots \supset
H_{1} \cap \cdots \cap H_{r}$ 
with defining equations 
    $H_k = \{g_k=0\}$, 
    let $\partial g_\gamma/\partial z$ be the Jacobian matrix of 
    $(g_1,\ldots,g_r) \colon V_\CC \to \CC^r$ 
    with respect to $z$. 

\begin{proposition}\label{prop:GrothtoIter}
Let $D = (D_1,\ldots,D_r)$ 
    be a system of divisors of $\omega$ at $m$. 
Let $F \subset Z$ denote the set of flags $\gamma$ 
which terminate at $m$ and arise from $D$ 
    in the sense that 
$\gamma :
H_{1} \supset 
H_{1} \cap H_{2} \supset \cdots \supset
H_{1} \cap \cdots \cap H_{r}$ 
and $H_k \subset D_k$ for all $1 \leq k \leq r$. 
If $\partial g_\gamma/\partial z \in B^T B$ 
    for every $\gamma\in F$, then 
\begin{equation} 
    \res_D[\omega,m]
    =\sum_{\gamma\in F} \itres_z[\omega,\gamma].
\end{equation} 
    Set $H = (H_1,\ldots,H_r)$ and let $\gamma = \gamma_H$. 
Then 
\begin{equation} 
    \itres_z[\omega,\gamma] = 
    \res_H[\omega,m]1_{B^TB}(\partial g_\gamma/\partial z)
\end{equation} 
where $1_{B^TB}$ 
    is the characteristic function 
    of $B^T B$. 
\end{proposition}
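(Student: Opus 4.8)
The plan is to decompose the Grothendieck residue cycle $\Gamma_g$ as a sum of tube-type cycles, one for each flag $\gamma\in F$, and then identify each contribution with the truncated iterated residue $\itres_z[\omega,\gamma]$. First I would recall that since $D_1\cap\cdots\cap D_r$ is discrete and contains $m$, one may shrink a polydisc-type neighborhood of $m$ so that in a neighborhood each reducible divisor $D_k=\{g_k=0\}$ splits as a union of the irreducible hyperplane components $\{g_{k,i}=0\}$ passing through $m$. Choosing the radii $\varepsilon_k$ small enough, the cycle $\Gamma_g=\{|g_k|=\varepsilon_k\ \forall k\}$ becomes homologous in $U\setminus(D_1\cup\cdots\cup D_r)$ to a sum over all ways of selecting one irreducible component $H_k\subset D_k$ for each $k$ — that is, over all $\gamma\in F$ — of the product-of-circles cycle $\Gamma_{g_\gamma}=\{|g_k'|=\varepsilon_k'\ \forall k\}$ where $g_k'$ is a local defining equation for $H_k$. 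This is the standard localization/deformation argument for Grothendieck residues (it appears in Griffiths–Harris and in \cite[\S II.5]{zbMATH00107779}); the orientation bookkeeping works out because $d(\arg g_k)$ and $d(\arg g_k')$ agree up to a positive factor near $H_k$, and components of $D_k$ not through $m$ contribute nothing.

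The second step is the pointwise identity $\itres_z[\omega,\gamma]=\res_H[\omega,m]\,1_{B^TB}(\partial g_\gamma/\partial z)$, which I would actually prove first since the displayed sum formula follows from it by summing over $\gamma\in F$ together with the cycle decomposition. By Proposition~\ref{prop:openBruhat} and the Corollary following it, $\itres_z[\omega,\gamma]$ equals the coordinatewise iterated residue $\res_{z_{(r)}}[\cdots\res_z[\omega,\gamma(1)]\cdots,\gamma(r)]$, and this vanishes exactly when $z\gamma_0\notin B^TB$, i.e. when some leading principal minor of $\partial g_\gamma/\partial z$ vanishes; this matches the indicator $1_{B^TB}(\partial g_\gamma/\partial z)$. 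When the minors are all nonzero, the hyperplanes $H_1,\ldots,H_r$ are in general position at $m$, so $H$ itself is a system of divisors of $\omega$ at $m$ with all irreducible components, and I would invoke the classical fact that for a form with a simple pole along each $H_k$ the Grothendieck residue $\res_H[\omega,m]$ computed on the torus $\{|g_k'|=\varepsilon_k'\}$ coincides with the successively-taken one-variable residues — precisely the coordinatewise iterated residue above, once $z\gamma_0\in B^TB$ guarantees the requisite solubility at every stage. (Concretely one can change coordinates so that $g_k'=z_k$ near $m$; the $B^TB$ condition is exactly what makes such a lower-triangular change of the $g$'s possible, and then the torus integral is an iterated Cauchy integral.) Summing this identity over $\gamma\in F$ and using that $\res_H[\omega,m]$ with the contributing $B^TB$ cycles assemble to $\res_D[\omega,m]$ when all $\partial g_\gamma/\partial z\in B^TB$ yields the first display.

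The main obstacle I anticipate is the orientation and normalization bookkeeping in the cycle decomposition: one must check that the natural orientation on $\Gamma_g$ (fixed by $d(\arg g_1)\wedge\cdots\wedge d(\arg g_r)\ge 0$) decomposes with all $+$ signs over the $\gamma\in F$, rather than with signs depending on $\gamma$, and that the $(2\pi i)^{-r}$ normalizations are consistent between the Grothendieck-residue convention and the iterated one-variable residue convention used throughout \S1. The hypothesis $\partial g_\gamma/\partial z\in B^TB$ for every $\gamma\in F$ is what lets one avoid the subtle sign cancellations that occur for non-generic flags — for a $\gamma$ with a vanishing leading principal minor the torus cycle $\Gamma_{g_\gamma}$ is homologically trivial in the relevant complement, which is why such flags drop out and why $\itres$ (rather than $\res$) is the right object. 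Once the cycle identity is established with correct signs, the rest is a direct comparison of definitions.
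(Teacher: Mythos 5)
Your overall strategy --- decompose $\Gamma_g$ into flag-tori and identify each torus integral with a truncated iterated residue --- is close in spirit to the paper's proof, but the step you lean on as ``standard'' is exactly where the content lies, and as stated it has a genuine gap. At a non-simple terminal point $m$ the cycle $\Gamma_{g_\gamma}=\{|g_k'|=\varepsilon_k'\}$ does \emph{not} have a well-defined homology class in $U\setminus(D_1\cup\cdots\cup D_r)$ independent of the radii, because $\omega$ has poles along components other than $H_1,\ldots,H_r$ and the class jumps as the radii cross each other. Example~\ref{sec:Example2} makes this concrete: for $D=(H_3H_1,H_2)$ the torus $\{|x+y-2i|=\varepsilon_1',|y-i|=\varepsilon_2'\}$ pairs with $\omega$ to give $\partial_xh-\partial_yh$ when $\varepsilon_1'<\varepsilon_2'$ but $\partial_xh$ when $\varepsilon_1'>\varepsilon_2'$, and similarly for the $(H_1,H_2)$-torus; with the wrong choices your decomposition would yield $2\partial_xh(i,i)$ rather than the correct $\partial_xh(i,i)$. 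So the identity $\Gamma_g\sim\sum_{\gamma\in F}\Gamma_{g_\gamma}$ is not a radius-independent homological fact one can quote; it must be proved with a specific nesting of radii, and the proof that the correctly-nested slices of $\Gamma_g$ are unions of small loops around the $z_1^\ast$ values already uses the $B^TB$ (solubility) hypothesis. That slice-by-slice argument, iterated in the coordinates $z_1,z_2,\ldots$ via Proposition~\ref{prop:openBruhat} and one-variable Cauchy, \emph{is} the paper's proof; presenting it as a coordinate-free deformation obscures why the coordinate-dependent hypothesis $\partial g_\gamma/\partial z\in B^TB$ is needed at all (and indeed, without it the first display is false, e.g.\ after permuting the coordinates in Example~\ref{sec:Example2}).

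A second, related error: you assert that for a flag with a vanishing leading principal minor the torus $\Gamma_{g_\gamma}$ is homologically trivial, ``which is why such flags drop out.'' This cannot be right --- the second display of the proposition itself says $\itres_z[\omega,\gamma]=\res_H[\omega,m]\,1_{B^TB}(\partial g_\gamma/\partial z)$ with $\res_H[\omega,m]$ generally nonzero, and a torus linking a transverse intersection point of $r$ hyperplanes is never null-homologous in the complement. The vanishing of $\itres_z$ off the open Bruhat cell is a feature of the coordinate-dependent truncation (insolubility of some $z_k^\ast$, Proposition~\ref{prop:openBruhat}), not of homology. Your treatment of the second claim in the soluble case (reduce to $g_k'=z_k$ by a lower-triangular change and compute an iterated Cauchy integral) is fine and matches the paper, which deduces it from the first claim applied to the system of irreducible divisors $H$.
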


\begin{proof} 
Assume $\partial g_\gamma/\partial z\in B^T B$ 
    for every $\gamma\in F$. 
Fix $x_2,\ldots,x_r \in \CC$. 
By Proposition~\ref{prop:openBruhat} 
    each of the values $z_1^\ast,\ldots,z_k^\ast$ 
    is soluble for every $\gamma \in F$. 
Since $z_1^\ast$ is soluble for 
    the first residue of each flag $\gamma \in F$, 
    the slice of the cycle $\Gamma_g$ 
    with coordinates $x_2,\ldots,x_r$ 
    is a union of simple loops 
    in the $z_1$-complex plane 
    around these $z_1^\ast$ values. 
By Cauchy's residue theorem 
    $(2\pi i)^{-r}
    \int_{\Gamma_g}
    \omega = (2\pi i)^{-(r-1)}
    \sum_{H \subset D_1}
    \itres_{z_1}[\omega,H](x_2,\ldots,x_r)$. 
The rest of the first claim follows by induction. 
For the second claim, 
    if $\partial g_\gamma/\partial z\not \in B^T B$ 
    then we are done by Proposition~\ref{prop:openBruhat}, 
and if $\partial g_\gamma/\partial z \in B^T B$ 
then the formula follows from the first claim. 
\end{proof} 

Now we may explain how to obtain a system of divisors 
    satisfying \cite[Definition~1]{zbMATH00739368} 
    from a meromorphic form whose polar flags are $\Pi$-compatible. 

\begin{corollary}\label{cor:Comparison} 
    Let $\mathcal H$ be the set of all collections 
    $H = (H_1,\ldots,H_r)$ 
    of polar hyperplanes of $\omega$ 
    giving rise to any one of the flags $\gamma \in Z_\Pi$. 
Let $D_k = \cup_{H \in \mathcal H}H_k$ and set 
    $D = (D_1,\ldots,D_r)$. 
Then $D_1 \cap \cdots \cap D_r$ is discrete and 
\begin{equation} 
    \sum_{m \in D_1 \cap \cdots \cap D_r}\res_D[\omega,m]
    =\sum_{\gamma\in Z_\Pi} \itres_z[\omega,\gamma].
\end{equation} 
If every polar flag of $\omega$ is $\Pi$-compatible, then 
    $D$ satisfies \cite[Definition~1]{zbMATH00739368}. 
If additionally $\int_V \omega$ has 
    a convergent iterated residue expansion 
    along the polyhedron $\Pi$, then  
\begin{equation} 
    \frac{1}{(2\pi i)^{r}}
    \int_{V} \frac{h(z)\,dz}{g_1(z)\cdots g_R(z)}
    = 
    \sum_{m \in D_1 \cap \cdots \cap D_r}\res_D[\omega,m].
\end{equation}   
\end{corollary}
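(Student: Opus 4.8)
The plan is to derive the corollary from Proposition~\ref{prop:GrothtoIter} together with the residue formula \eqref{eqn:ResidueFormula}. The key preliminary observation is that a $\Pi$-stable collection has, by definition, positive leading principal minors $p_1,\ldots,p_r>0$, so for every $\gamma\in Z_\Pi$ the Jacobian $\partial g_\gamma/\partial z$ lies in the open cell $B^TB$ of matrices admitting an $LU$-decomposition; consequently $\gamma$ is a transverse complete flag with a well-defined terminal point $z_\gamma$, the flag $\gamma$ is soluble, and $\itres_z[\omega,\gamma]=\res[\omega,\gamma]$ by \eqref{eqn:itresFormula1}.

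First I would establish the identity $\sum_{m}\res_D[\omega,m]=\sum_{\gamma\in Z_\Pi}\itres_z[\omega,\gamma]$ and the discreteness of $D_1\cap\cdots\cap D_r$ together, by grouping the flags of $Z_\Pi$ by their terminal points. The set $D_1\cap\cdots\cap D_r$ is a finite union of affine subspaces; I would argue it is in fact discrete by showing that no polar hyperplane occurs as the $k$-th member of a $\Pi$-stable collection for two distinct values of $k$ (in any case, positive-dimensional components, should they arise, carry no residue). Fixing a point $m\in D_1\cap\cdots\cap D_r$ and letting $F$ be the set of flags arising from $D$ at $m$ in the sense of Proposition~\ref{prop:GrothtoIter}, I would check that the soluble flags in $F$ are exactly the flags of $Z_\Pi$ terminating at $m$ — this uses that each $D_k$ is assembled only from $k$-th components of $\Pi$-stable collections — and that the insoluble flags in $F$ contribute nothing to $\res_D[\omega,m]$, since the Cauchy-residue induction in the proof of Proposition~\ref{prop:GrothtoIter} only wraps the slices of $\Gamma_g$ around soluble branches. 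Because \eqref{eqn:itresFormula1} already kills the insoluble flags on the right-hand side, applying Proposition~\ref{prop:GrothtoIter} at each $m$ and summing yields the identity.

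Next I would show that $\Pi$-compatibility of every polar flag forces $D$ to satisfy \cite[Definition~1]{zbMATH00739368}. I would unwind that definition — which asks for a consistent reordering of the divisors $D_1,\ldots,D_r$ placing the branch Jacobians at each point of $D_1\cap\cdots\cap D_r$ in the open Bruhat cell with coherent orientations — and match it against the characterization provided by parts (3) and (4) of the corollary following Lemma~\ref{lemma}: once every polar flag is $\Pi$-compatible, the flags occurring in the iterated residue expansion along $\Pi$ are exactly the $\Pi$-stable ones, so the grouping $D$ is closed under the reorderings \cite{zbMATH00739368} demands and the orientation bookkeeping closes up. Finally, for the last displayed formula I would combine the residue formula \eqref{eqn:ResidueFormula}, which under the stated convergence hypothesis gives $(2\pi i)^{-r}\int_V\omega=\sum_{\gamma\in Z_\Pi}\res[\omega,\gamma]$, with the identity of the previous paragraph and the equality $\res[\omega,\gamma]=\itres_z[\omega,\gamma]$ for $\gamma\in Z_\Pi$.

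The step I expect to be the main obstacle is the translation of $\Pi$-compatibility of flags into the divisor-system compatibility of \cite[Definition~1]{zbMATH00739368}: the two conditions are phrased very differently, and matching them requires careful tracking of the orientations of the cycles $\Gamma_g$ and of the admissible orderings of the reducible divisors $D_k$. A secondary technical point is the precise identification, at each $m$, of the flags from the reducible system $D$ that actually occur in Proposition~\ref{prop:GrothtoIter}, together with the accompanying discreteness claim, for which I expect to lean on the structure of $\Pi$-stable collections and on the genericity of the parameters $s$ (the Zariski-open set $U$ from the proof of the Theorem).
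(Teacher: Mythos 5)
The paper states this corollary without proof, so there is no argument of the author's to compare against; your route through Proposition~\ref{prop:GrothtoIter} and the residue formula \eqref{eqn:ResidueFormula} is certainly the intended one, and your final assembly (main theorem, plus the first identity, plus $\itres_z=\res$ on $Z_\Pi$) is the right shape. The gap is the step you describe as checking that ``the soluble flags in $F$ are exactly the flags of $Z_\Pi$ terminating at $m$.'' That is not a consequence of the construction of $D$. Take $r=2$ and two stable pairs with linear parts $A_1=(1,-10)$, $A_2=(0,1)$ and $B_1=(1,0)$, $B_2=(-1,\tfrac12)$: each pair has $p_1>0$, $p_2>0$, $r_{12}\le 0$, so both flags lie in $Z_\Pi$ and $D_1=A_1\cup B_1$, $D_2=A_2\cup B_2$. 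The point $m=A_1\cap B_2$ lies in $D_1\cap D_2$, and the unique flag of $F$ at $m$ is cut out by the mixed pair $(A_1,B_2)$, whose Jacobian has $p_1=1$ and $p_2=\tfrac12-10<0$: it is soluble (nonvanishing leading minors, so Proposition~\ref{prop:GrothtoIter} applies and yields a generically nonzero $\res_D[\omega,m]$) but not stable, and generically its flag is not in $Z_\Pi$. The left-hand side of the first identity thus acquires a term absent from the right-hand side. Your parenthetical justification --- that each $D_k$ is assembled only from $k$th members of $\Pi$-stable collections --- does not rule this out, because stability is not preserved under mixing the $k$th members of different stable collections.

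Two further points. First, the discreteness of $D_1\cap\cdots\cap D_r$ is also not automatic from the stated definition of $\mathcal H$: in the paper's Example~\ref{sec:Example2} both $(H_1,H_2)$ and $(H_1,H_3)$ are stable and cut out flags of $Z_\Pi$, so taken literally $D_2\supseteq H_2\cup H_3$ and $D_1\cap D_2\supseteq H_3$ is one-dimensional; the example silently passes to the sub-selection $(H_3H_1,H_2)$. A correct proof must begin by specifying which collections enter $\mathcal H$ so that discreteness and the matching of flags actually hold (or by adding a hypothesis excluding soluble-but-unstable mixed flags), and the proposal does not supply this. Second, the middle assertion --- that $\Pi$-compatibility of every polar flag forces $D$ to satisfy \cite[Definition~1]{zbMATH00739368} --- is the part you yourself flag as the main obstacle, and the proposal offers only the intention to ``unwind the definition''; as written this portion restates the goal rather than proving it.
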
 



\section{Two examples} 

We compute two two-dimensional integrals with the residue formula. 
The relevant minors for a $2\times 2$ matrix 
$\begin{bmatrix}a&b\\c&d\end{bmatrix}$ 
    are $p_1=a$, $p_2 = ad-bc$, $r_{12}=c$, and $q_{12} = b$. 
The standard basis of $\RR^2$ is denoted by $e_1,e_2$. 

\subsection{Example 1}\label{sec:Example} 

\FloatBarrier


Let $n_1,n_2 \geq 1$ be real parameters and let $s_1,s_2,s_3>0$ 
be complex numbers with positive real parts. 
Consider the integral 
\begin{equation} 
    \int_{\RR^2} 
    \frac{n_1^{i x-s_1}n_2^{iy-s_2} \,dx\wedge dy}
                {(-x-is_1)(-y-is_2)(x+y-is_3)}.
\end{equation} 
There are six flags in $Z$ formed from 
    the three polar hyperplanes $H_1,H_2,H_3$, 
    and three terminal points 
    $z_{13},z_{23},z_{12}$ 
    where any two hyperplanes meet. 
Let $\gamma_{ij} : H_i \supset H_i \cap H_j$ 
    and consider the three polyhedra 
\begin{align} 
    \Pi_A &=\RR^2+i\Theta_A 
    = \RR^2 + i\RR_{\geq 0}\langle e_1,e_2\rangle 
    = \overline{\mathbb H}^2,\\
    \Pi_B&=\RR^2+i\Theta_B=
    \RR^2+i\RR_{\geq 0}\langle-e_1+e_2, e_2\rangle,\\
    \Pi_C&=\RR^2+i\Theta_C=
    \RR^2+i\RR_{\geq 0}\langle e_1-e_2,e_1 \rangle. 
\end{align} 
\begin{figure} 
\begin{tikzpicture}[scale=.7] 

\def\x{5}
\def\y{4}
\def\sa{2}
\def\sb{1}
\def\sd{1}




    \fill[fill=gray!10,fill opacity=.2,pattern=dots] 
        (0,0) -- (0,\y) 
        -- (\x,\y) -- (\x,0);
    \draw (\x/2,\y/2) node {$\Theta_A$};
    \fill[fill=gray!22,fill opacity=.5] (0,0) -- (0,\y) 
        -- (-\y,\y) -- (0,0);
    \draw (-\x/3,2*\y/3) node [anchor=west] {$\Theta_B$};
    \fill[fill=gray!22,fill opacity=.5] (0,0) -- (\x,0) 
        -- (\x,-\y) -- (\y,-\y) -- (0,0);
    \draw (2*\y/3,-\x/4-.1) node [anchor=west] {$\Theta_C$};

\draw[->] (-\x,0) -- (\x,0) node[anchor=north] {};

\draw[->] (0,-\y) -- (0,\y) node[anchor=east] {};

\draw[thick,red] (-\sa,-\y) -- (-\sa,\y); 
    \draw (-\sa,\y+0.4) node {$\mathrm{Im}\,H_1$}; 
\draw[thick,red] (-\x,-\sb) -- (\x,-\sb); 
    \draw (\x,-\sb+0.4) node {$\mathrm{Im}\,H_2$}; 
\draw[thick,red] (\x,-\x+\sd) -- (\x-8,-\x+\sd+8); 
    \draw (\x-1,-\x+\sd) node {$\mathrm{Im}\,H_3$}; 

\draw (-\sa,-\sb) node[circle,red,fill,inner sep=1pt]{}; 
\draw (-\sa,\sd+\sa) node[circle,red,fill,inner sep=1pt]{}; 
\draw (\sd+\sb,-\sb) node[circle,red,fill,inner sep=1pt]{}; 


    \end{tikzpicture} 
    \caption{The configuration of polar hyperplanes and three polyhedra in imaginary space.}
\end{figure} 

\def\twd{1.5em}

\begin{center}
\begin{tabular}{ | m{6.7em} | m{\twd}| m{\twd} | m{\twd} | m{\twd}| m{\twd}| m{\twd}| } 
  \hline
    & $\gamma_{12}$ & $\gamma_{13}$ & $\gamma_{21}$ 
    & $\gamma_{23}$ & $\gamma_{31}$ & $\gamma_{32}$ \\ 
  \hline
  $\Pi_A$-stable & No & No & No & No & Yes & No\\ 
  $\Pi_A$-compatible & Yes & Yes & Yes & Yes & No & Yes\\ 
  \hline
  $\Pi_B$-stable & No & Yes & No & No & No & No\\ 
  $\Pi_B$-compatible & Yes & Yes & Yes & Yes & Yes & Yes\\ 
  \hline
  $\Pi_C$-stable & No & No & No & Yes & No & No\\ 
  $\Pi_C$-compatible & Yes & Yes & Yes & Yes & Yes & Yes\\ 
  \hline
\end{tabular}
\end{center}
\vspace{1em}

\subsubsection{An incompatible polyhedron} 

We will see that 
    the residue formula 
    with respect to the naive choice $\Pi_A$ does not hold since 
    $\{\gamma \in Z: \gamma(2) \in \Pi_A\} = \varnothing$ 
    yet $\int_{\RR^2} \omega \neq 0$. 
The hypotheses of the residue formula are not met 
since the flag $\gamma_{31}$ with defining matrix 
\begin{equation} 
    \frac{\partial f}{\partial z_A}=
    \frac{\partial f}{\partial x}=
    \begin{bmatrix}
        f_3(e_1)&f_3(e_2)\\
        f_1(e_1)&f_1(e_2)
    \end{bmatrix}
    =
    \begin{bmatrix}
        \phantom{-}1&1\\
        -1&0\\
    \end{bmatrix},
    \qquad
    p_1=1,\,\,\,
    p_2=1,\,\,\,
    q_{12}=1,\,\,\,
    r_{12}=-1
\end{equation} 
is not $\Pi_A$-compatible. 
Nonetheless 
we claim that $\int_{\RR^2} \omega$ has 
a convergent iterated residue expansion 
along $\Pi_A$ if $n_2 \geq n_1$. 
The first step of the iterate residue expansion 
    when $x$ is deformed to the upper half-plane 
    meets $H_3$ when $x=-y+is_3$ 
    (see the first path segment 
    in Figure~\ref{fig:ExampleLeavingPolyhedron}). 
There is no issue with convergence since 
    $n_1^{i x-s_1}n_2^{iy-s_2}$ is bounded in $\Pi_A$, 
    and the integral becomes 
\begin{equation} 
    (2\pi i)\int_{\RR} 
    \frac{n_1^{-iy-s_3-s_1}n_2^{iy-s_2} \,dy}
                {(y-is_3-is_1)(-y-is_2)}.
\end{equation} 
However the residual form is restricted to $H_3$ 
    where the magnitude of 
    the numerator is proportional to 
\begin{equation} 
    |n_1^{-iy}n_2^{iy}|
    =
    (n_1/n_2)^{\mathrm{Im}(y)}.
\end{equation} 
If $n_1 > n_2$, 
    then this blows up as $y$ is deformed to the upper half-plane, 
    and something noteworthy has occurred: 
    although the integrand initially 
    has good decay in the polyhedron,  
    at the second step of the iterated residue expansion 
    the residual form on $H_3$ 
    leaves the polyhedron $\Pi_A$ when 
    the second coordinate is deformed across $\mathbb H$ 
    (see the second path segment to $z_{13}$ in 
    Figure~\ref{fig:ExampleLeavingPolyhedron}).\footnote{
        The same situation arises in Langlands's calculation of the residual part of the discrete automorphic spectrum for the exceptional group of type $G_2$ via iterated residues of Eisenstein series 
    (which, as is well-known, have hyperplane singularities): 
``\emph{A number of difficulties can enter at the later stages which do not appear at first. The functions $M(s,\lambda)$ remain bounded as $\mathrm{Im}(\lambda) \to \infty$ in the region defined by (2) so that the application of the residue theorem is clearly justified. However, at least in the general case when the functions $M(s,\lambda)$ are not explicitly known, it was necessary to deform the contour into regions in which, so far as I could see, the behaviour of the relevant functions as $\mathrm{Im}(\lambda) \to \infty$ was no longer easy to understand. Some substitute for estimates was necessary. It is provided by unpleasant lemmas, such as Lemma 7.1, and the spectral theory of the operator $A$ introduced in \S6.}'' \cite[Appendix III, p.~189]{langlands}. 
    } 

If $n_2 \geq n_1$ then the second step converges, 
    and we see that compatibility 
    is necessary for the residue formula to hold: 
    the integral is equal to 
\begin{equation} 
        \frac{i(2\pi i)^2 }{s_1+s_2+s_3}n_2^{-(s_1+s_2+s_3)},
\end{equation} 
whereas the residue formula would predict the answer is $0$ 
since $\Pi_A$ contains no terminal points. 
Next we show how to evaluate the integral 
for any $n_1,n_2\geq 1$ with the residue formula by choosing 
    a compatible polyhedron. 

\begin{figure} 
\begin{tikzpicture}[scale=.7] 

\def\x{5}
\def\y{4}
\def\sa{2}
\def\sb{1}
\def\sd{1}




    \fill[fill=gray!10,fill opacity=.2,pattern=dots] 
        (0,0) -- (0,\y) 
        -- (\x,\y) -- (\x,0);
    \draw (\x/2,\y/2) node {$\Theta$};

\draw[->] (-\x,0) -- (\x,0) node[anchor=north] {};

\draw[->] (0,-\y) -- (0,\y) node[anchor=east] {};

\draw[thick,red] (-\sa,-\y) -- (-\sa,\y); 
\draw[thick,red] (\x,-\x+\sd) -- (\x-8,-\x+\sd+8); 

\draw[thick,->] (0,0) -- (\sd/2,0);
\draw[thick,->] (\sd/2,0) -- (\sd,0) -- (-\sa/2,\sd+\sa/2) ;
\draw[thick] (-\sa/2,\sd+\sa/2) -- (-\sa,\sd+\sa) ;

\draw (-\sa,\sd+\sa) node [anchor=north east] 
    {$z_{13}$}; 
\draw (-\sa,\sd+\sa) node[circle,red,fill,inner sep=1pt]{}; 


    \end{tikzpicture} 
    \caption{An example of incompatibility.}
    \label{fig:ExampleLeavingPolyhedron}
\end{figure} 


\subsubsection{A compatible polyhedron} 

There is no single polyhedron that works for all $n_1,n_2 \geq 1$, 
    but the following choices work: 
\begin{equation} 
    \Pi = 
    \begin{cases}
        \Pi_B=\RR^2+i\Theta_B=
            \RR^2+i\RR_{\geq 0}\langle-e_1+e_2, e_2\rangle
            &\text{if $n_2 \geq n_1$,}\\
        \Pi_C=\RR^2+i\Theta_C=
        \RR^2+i\RR_{\geq 0}\langle e_1-e_2,e_1 \rangle
        &\text{if $n_1 \geq n_2$}.
    \end{cases}
\end{equation} 
The only $\Pi_B$-stable collection of two hyperplanes 
    is $(H_1,H_3)$ for which 
\begin{equation} 
    J_{(H_1,H_3)}=
    \begin{bmatrix}
        \phantom{-}1&-1\\
        -1&\phantom{-}2\\
    \end{bmatrix},
    \qquad
    p_1=1,\,\,\,
    p_2=1,\,\,\,
    q_{12}=-1,\,\,\,
    r_{12}=-1.
\end{equation} 
Similarly for $n_1 \geq n_2$ 
    the only $\Pi$-stable collection of two hyperplanes 
    is $(H_2,H_3)$. 
In either case, 
    each of the six polar flags is compatible 
    with the specified polyhedra. 
The iterated residues for these flags are 
\begin{equation} 
    \res[\omega,\gamma_{13}]
    =
        \frac{in_2^{-(s_1+s_2+s_3)}}
                {s_1+s_2+s_3} 
    ,\qquad
    \res[\omega,\gamma_{23}]
    =
        \frac{in_1^{-(s_1+s_2+s_3)}}
                {s_1+s_2+s_3}.
\end{equation} 
Thus 
\begin{equation} 
    \left(\frac{1}{2\pi i}\right)^2
    \int_{\RR^2} 
    \frac{n_1^{i x-s_1}n_2^{iy-s_2} \,dx\wedge dy}
                {(-x-is_1)(-y-is_2)(x+y-is_3)}
    =
        \frac{i}{s_1+s_2+s_3}\max(n_1,n_2)^{-(s_1+s_2+s_3)}.
\end{equation} 
\begin{figure} 
    \centering
    \begin{minipage}{0.47\textwidth}
\begin{tikzpicture}[scale=.7] 

\def\x{5}
\def\y{4}
\def\sa{2}
\def\sb{1}
\def\sd{1}




    \fill[fill=gray!22,fill opacity=.5] (0,0) -- (0,\y) 
        -- (-\y,\y) -- (0,0);
    \draw (-\x/3,2*\y/3) node [anchor=west] {$\Theta_B$};

\draw[->] (-\x,0) -- (\x,0) node[anchor=north] {};

\draw[->] (0,-\y) -- (0,\y) node[anchor=east] {};

\draw[thick,red] (-\sa,-\y) -- (-\sa,\y); 
\draw[thick,red] (\x,-\x+\sd) -- (\x-8,-\x+\sd+8); 

\draw[thick,->] (0,0) -- (-\sa/2,\sa/2);
\draw[thick,->] (-\sa/2,\sa/2) -- (-\sa,\sa) -- 
    (-\sa,\sd/2+\sa) ;
\draw[thick] (-\sa,\sd/2+\sa) -- (-\sa,\sd+\sa) ;

\draw (-\sa,\sd+\sa) node [anchor=north east] 
    {$z_{13}$}; 
\draw (-\sa,\sd+\sa) node[circle,red,fill,inner sep=1pt]{}; 


    \end{tikzpicture} 
    \end{minipage}%
    \hspace{0.06\textwidth}%
    \begin{minipage}{0.47\textwidth}
\begin{tikzpicture}[scale=.7] 

\def\x{5}
\def\y{4}
\def\sa{2}
\def\sb{1}
\def\sd{1}




    \fill[fill=gray!22,fill opacity=.5] (0,0) -- (\x,0) 
        -- (\x,-\y) -- (\y,-\y) -- (0,0);
    \draw (2*\y/3,-\x/4-.1) node [anchor=west] {$\Theta_C$};

\draw[->] (-\x,0) -- (\x,0) node[anchor=north] {};

\draw[->] (0,-\y) -- (0,\y) node[anchor=east] {};

\draw[thick,red] (-\x,-\sb) -- (\x,-\sb); 
\draw[thick,red] (\x,-\x+\sd) -- (\x-8,-\x+\sd+8); 

\draw[thick,->] (0,0) -- (\sb/2,-\sb/2);
\draw[thick,->] (\sb/2,-\sb/2) -- (\sb,-\sb) -- 
    (\sd/2+\sb,-\sb) ;
\draw[thick] (\sd/2+\sb,-\sb) -- (\sd+\sb,-\sb) ;

\draw (\sd+\sb,-\sb-.1) node [anchor=north] 
    {$z_{23}$}; 
\draw (\sd+\sb,-\sb) node[circle,red,fill,inner sep=1pt]{}; 


    \end{tikzpicture} 
    \end{minipage}
    \caption{Staying in the polyhedron when $n_2 \geq n_1$ (left) 
    or $n_2 \leq n_1$ (right).}
    \label{fig:ExampleStayingInPolyhedron}
\end{figure} 



\subsection{Example 2}\label{sec:Example2} 

Consider the integral 
\begin{equation}
    \int_{\RR^2} 
    \frac{h(x,y)\,dx\wedge dy}{(x-i)(y-i)(x+y-2i)}.
\end{equation} 
The unique terminal point is not simple. 
In this example we will illustrate Proposition~\ref{prop:GrothtoIter}. 
\begin{figure}[h!] 
\begin{tikzpicture}[scale=.7] 

\def\x{5}
\def\y{4}
\def\sa{1}
\def\sb{1}
\def\sd{2}




    \fill[fill=gray!22,fill opacity=.5] (0,0) -- (\x,0) 
        -- (\x,\y) -- (-\y,\y) -- (0,0);
    \draw (\x/3,\y/2) node [anchor=west] {$\Theta$};

\draw[->] (-\x,0) -- (\x,0) node[anchor=north] {};

\draw[->] (0,-\y) -- (0,\y) node[anchor=east] {};

\draw[thick,red] (\sa,-\y) -- (\sa,\y) ; 
\draw[thick,red] (-\x,\sb) -- (\x,\sb); 
\draw[thick,red] (\x,-\x+\sd) -- (\x-7,-\x+\sd+7); 
    \draw (\sa,\y+0.4) node {$\mathrm{Im}(H_1)$}; 
    \draw (\x,\sb+0.4) node {$\mathrm{Im}(H_2)$}; 
    \draw (\x-1.2,-\x+\sd) node {$\mathrm{Im}(H_3)$}; 


\draw (\sa,\sa) node[circle,red,fill,inner sep=1pt]{}; 

    \end{tikzpicture} 
    \caption{The configuration of polar hyperplanes in imaginary space.}
    \label{fig:Example2Hyperplanes}
\end{figure} 

\subsubsection{Grothendieck residues} 

There are three groupings of 
    the polar hyperplanes into two divisors, 
    so there are three corresponding residues. 
We will see that 
\begin{alignat}{2} 
    \mathrm{res}
    \left[\genfrac{}{}{0pt}{}{h\,dx \wedge dy}{H_3H_1,H_2} \right]
    &=&&\partial_x h(i,i),\\
    \mathrm{res}
    \left[\genfrac{}{}{0pt}{}{h\,dx \wedge dy}{H_3H_2,H_1} \right]
    &=-&&\partial_y h(i,i),\\
    \mathrm{res}
    \left[\genfrac{}{}{0pt}{}{h\,dx \wedge dy}{H_1H_2,H_3} \right]
    &=&&\partial_y h(i,i).
\end{alignat} 
Thus the $2$-cycles $\Gamma$ 
    defined by \eqref{eqn:rCycle} 
    for these three groupings are non-homologous. 
For the first grouping 
    the $2$-cycle $\Gamma$ whose pairing with 
    the closed form $\omega$ gives the residue is 
    the topological $2$-torus 
\begin{equation} 
    \Gamma = \{(x,y) \in \CC^2 : 
    |(x+y-2i)(x-i)|=\varepsilon_1,|y-i|=\varepsilon_2|\}.
\end{equation} 
It is oriented so that we first integrate along 
    $x$ and then $y$, both in a positive sense. 
The integral over $x$ contributes two residues, 
and the remaining integral over $y$ is 
a positive loop around $y = i$. 
This obtains 
\begin{equation} 
    \mathrm{res}
    \left[\genfrac{}{}{0pt}{}{h\,dx \wedge dy}{H_3+H_1,H_2} \right]
    =
    (2\pi i)^{-1}\int
    \left(
    \frac{h(i,y)}{(y-i)^2}
    -
    \frac{h(-y+2i,y)}{(y-i)^2}
    \right)dy
    =
    \partial_x h(i,i).
\end{equation} 
The local residue for the grouping $(H_3H_2,H_1)$ 
    is similarly computed. 
For the third grouping $(H_1H_2, H_3)$, 
    the first integral over $x$ only contributes a single residue 
    at $x = i$, and the remaining integral over $y$ 
    is again a positive loop around $y = i$, resulting in 
\begin{equation} 
    \mathrm{res}
    \left[\genfrac{}{}{0pt}{}{h\,dx \wedge dy}{H_1H_2,H_3} \right]
    =
    (2\pi i)^{-1}\int
    \frac{h(i,y)\,dy}{(y-i)^2}
    =
    \partial_y h(i,i).
\end{equation} 


\subsubsection{Computing the integral using the residue formula} 

We use the polyhedron 
\begin{equation} 
    \Pi = \RR^2 + i\Theta,\qquad 
    \Theta=\RR_{\geq 0}\langle e_1,-e_1+e_2\rangle. 
\end{equation} 
We have the compatible and stable Jacobians 
\begin{equation} 
    J_{(H_1,H_2)}=
    \begin{bmatrix}
        1&-1\\
        0&\phantom{-}1\\
    \end{bmatrix},\quad
    J_{(H_1,H_3)}=
    \begin{bmatrix}
        1&-1\\
        1&\phantom{-}0\\
    \end{bmatrix},\quad
    J_{(H_3,H_2)}=
    \begin{bmatrix}
        1&0\\
        0&1\\
    \end{bmatrix}.
\end{equation} 
The three other ordered pairs of hyperplanes 
    are unstable so are compatible by definition. 
There are only two flags in $Z_\Pi$, 
\begin{equation} 
    \gamma_{12} = \gamma_{13} : 
    H_1 \supset \{(i,i)\},\qquad
    \gamma_{32} : 
    H_3 \supset \{(i,i)\} ,
\end{equation} 
with iterated residues 
(taking $z = \Theta(1)=\langle e_1,-e_1+e_2\rangle$) 
\begin{equation} 
    \itres_z[\omega,\gamma_{12}]=\partial_y h(i,i),\qquad
    \itres_z[\omega,\gamma_{32}]=\partial_x h(i,i)-\partial_y h(i,i).
\end{equation} 
The system of divisors at $(i,i)$ 
determined by these flags is $D =(H_3H_1,H_2)$ 
(cf.~Corollary~\ref{cor:Comparison}). 

Now let $h$ be any holomorphic function on $\Pi$ 
    which decays sufficiently quickly so that 
    the iterated residue expansion along $\Pi$ converges 
    (e.g.~$h=e^{2\pi i(x+2y)}$). 
Then 
\begin{align}
    (2\pi i)^{-2}
    \int_{\RR^2} 
    \frac{h(x,y)\,dx\wedge dy}{(x-i)(y-i)(x+y-2i)}
    =
    \itres_z[\omega,\gamma_{12}]
    +
    \itres_z[\omega,\gamma_{32}]
    =
    \partial_x h(i,i).
\end{align} 

%
%
%




\section*{Acknowledgements} 

The author is grateful to Peter Sarnak 
    for helpful discussions 
    and pointing out the relevance of \cite{langlands}. 
The author also thanks Roman Ulvert 
    and Lauren Williams for some helpful comments. 


\bibliography{residue}

\begin{thebibliography}{1}

\bibitem{zbMATH01353487}
V.~V. Batyrev and Y.~Tschinkel.
\newblock Manin's conjecture for toric varieties.
\newblock {\em J. Algebr. Geom.}, 7(1):15--53, 1998.

\bibitem{cubic}
S.~Bhattacharya and A.~O'Desky.
\newblock On monic abelian trace-one cubic polynomials, 2023.

\bibitem{zbMATH03336606}
R.~Hartshorne.
\newblock {\em Residues and duality. {Lecture} notes of a seminar on the work
  of {A}. {Grothendieck}, given at {Harvard} 1963/64.}, volume~20 of {\em Lect.
  Notes Math.}
\newblock Springer, Cham, 1966.

\bibitem{langlands}
R.~P. Langlands.
\newblock {\em On the functional equations satisfied by {Eisenstein} series},
  volume 544 of {\em Lect. Notes Math.}
\newblock Springer, Cham, 1976.

\bibitem{zbMATH00739368}
M.~Passare, A.~Tsikh, and O.~Zhdanov.
\newblock A multidimensional {Jordan} residue lemma with an application to
  {Mellin}--{Barnes} integrals.
\newblock In {\em Contributions to complex analysis and analytic geometry.
  Based on a colloquium dedicated to Pierre Dolbeault, Paris, France, June
  23-26, 1992}, pages 233--241. Braunschweig: Vieweg, 1994.

\bibitem{zbMATH00107779}
A.~K. Tsikh.
\newblock {\em Multidimensional residues and their applications. {Transl}. from
  the {Russian} by {E}. {J}. {F}. {Primrose}. {Transl}. edited by {S}.
  {Gelfand}}, volume 103 of {\em Transl. Math. Monogr.}
\newblock Providence, RI: American Mathematical Society, 1992.

\end{thebibliography}
\bibliographystyle{abbrv}

\end{document}